 \theoremstyle{theorem}
\newtheorem{lemma}{Lemma}
\newtheorem{theorem}{Theorem}[section]
\newtheorem{corollary}[theorem]{Corollary}
\newtheorem{proposition}[theorem]{Proposition}
\theoremstyle{remark}
\newtheorem*{remark}{Remark}
\newcommand{\CC}{\mathbb{C}\,}
\newcommand{\DD}{\mathbb D}
\newcommand{\TT}{\mathbb T}
\def\mS{{\mathcal S}}
\def\D{{\mathbb D}}
\def\T{{\mathbb T}}
\newcommand{\vf}{\varphi}
\newcommand{\vt}{\vartheta}
\newcommand{\Dd}{D_\delta}
\newcommand{\K}{\mathcal K}
\DeclareMathOperator{\dist}{dist}
\DeclareMathOperator{\diam}{diam}
\DeclareMathOperator{\supp}{supp}
\newcommand{\eps}{\epsilon}
\newcommand{\beq}{\begin{equation}}
 \newcommand{\eeq}{\end{equation}}
\begin{document}

\title[Trace ideal criteria on model spaces]{Trace ideal criteria for embeddings and composition operators on model spaces}
 
\author[A. Aleman,  Yu. Lyubarskii, E. Malinnikova, K.-M. Perfekt]
{Alexandru Aleman, Yurii Lyubarskii, Eugenia Malinnikova, Karl-Mikael Perfekt}

\subjclass[2010]{ Primary 47B33; Secondary 30H10, 30J05, 47A45.}
\keywords{Embedding, composition operator, model space, Nevanlinna counting function, one-component inner function, Schatten classes}
\begin{abstract}
Let $K_\vt$ be a model space generated by an inner function $\vt$. We study the Schatten class membership of embeddings $I : K_\vt \hookrightarrow L^2(\mu)$, $\mu$ a positive measure, and of composition operators $C_\vf:K_\vt\rightarrow H^2(\D)$ with a holomorphic function $\vf:\D\rightarrow\D$. In the case of one-component inner functions $\vt$ we show that the problem  can be reduced to the study of natural extensions of $I$ and $C_\vf$ to the Hardy-Smirnov space $E^2(D)$ in some  domain  $D\supset \D$.  In particular, we obtain a characterization of  Schatten membership  of $C_\vf$ in terms of  Nevanlinna counting function.   By example this characterization does not hold true for general $\vt$.            
\end{abstract}
\thanks{This work was carried out at the Center for Advanced Study, Norwegian Academy of Science and Letters. Yu.L. and E.M. are partially supported by project 213638 of the Norwegian Research Council}
\maketitle

\section{Introduction}

Let $\DD=\{z  :  |z|<1\}$ be the unit disk  and $\T=\{z: |z|=1\}$ be the unit circle. A bounded analytic function $\vt$ in $\D$ is said to be {\em inner }
if its non-tangential boundary values satisfy $|\vt|=1$ almost everywhere on $\T$. We denote by $H^2(\D)$ the Hardy space on $\D$ and by  
 $K_\vt= H^2(\DD)\ominus \vt H^2(\DD)$  the corresponding model space. 
 
  In this article  two   classes of operators are considered:  embeddings   $I_\mu : K_\vt \to L^2(\mu)$, where $\mu$  is a  finite positive measure supported on $\overline{\DD}$,
and composition operators 
$
 C_\vf: f \mapsto f\circ \vf
 $ 
 acting from  $K_\vt$ into $H^2(\DD)$, where $\vf: \DD \to \DD$ is a  holomorphic function. In fact, it is well-known that the latter type of operator may be considered as a special case of the former for a certain pullback measure $\mu_\vf$. We mention that embeddings of model spaces have   been studied by a number of authors \cite{C, C2, VT, CM, B}; composition operators on
Hardy (and more general) spaces is by now a classical  subject -- we refer the reader to \cite{Saksman}
for a description of the current state of the art and a history survey. In this article we study the composition operator acting
from the model space $K_\vt$ into $H^2(\DD)$ thus emphasis interaction between the
boundary behavior of $\vf$ and the spectrum of the inner function $\vt$. In such setting the problem has been considered in \cite{LM}. 
 Our main goal is to understand when such embedding and composition operators belong to the Schatten trace ideal $\mS_p$, $0 < p < \infty$.

The embedding operators on $K_\vt$ have  proved easier to analyze when $\vt$ is a {\em one-component} inner function, 
see \cite{A, VT, CM, B}.
In particular, the Schatten ideal membership of $I_\mu$ has   been characterized by Baranov \cite{B} for one-component $\vt$. In Section 3  we suggest a different  approach to the problem. We return to the  the original ideas of Cohn \cite{C} and  extend embedding operators on $K_\vt$ to operators acting on the Hardy-Smirnov space $E^2(D)$ of a certain domain $D\supset \D$. This allows us to obtain a geometrical criterion for the inclusion of $I_\mu$ in $\mS_p$. In particular we  recover the aforementioned result in \cite{B}. 

For composition operators $C_\vf$ we further refine our result to give trace ideal criteria in terms of the Nevanlinna counting function $N_\vf$,
\[
N_\vf(z)=\sum_{\vf(\zeta)=z} \log \frac 1 {|\zeta|}.
\]
 We  combine the geometric  approach   with recent results \cite{LQLR, LQLR2} that clarify the connection between the Nevanlinna counting function $N_\vf$ and the measure $\mu_\vf$,
in order to obtain the following characterization.

{
\renewcommand{\thetheorem}{\ref{thm:integralcond}}
\begin{theorem} 
Let $\vt$ be a one-component inner function. The operator $C_\vf:K_\vt\rightarrow H^2$ is in $\mS_p$, $0 < p < \infty$, if and only if 
\[
\int_\DD\left(\frac{N_\vf(z)(1-|\vt(z)|)^2}{1-|z|^2}\right)^{p/2}\left(\frac{1-|\vt(z)|^2}{1-|z|^2}\right)^2dA<\infty.\]
\end{theorem}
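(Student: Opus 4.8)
The plan is to realize $C_\vf$ as a Carleson-type embedding and then feed it into the machinery of Section~3 together with the Nevanlinna/pull-back dictionary of \cite{LQLR, LQLR2}. Recall first that $C_\vf\colon K_\vt\to H^2$ is unitarily equivalent to an embedding: there is a finite positive measure $\mu_\vf$ on $\overline{\DD}$ (the pull-back under $\vf$ of normalized arclength on $\T$, equivalently the measure appearing in the change-of-variables identity $\|f\circ\vf\|_{H^2}^2=|f(\vf(0))|^2+2\int_\DD|f'|^2N_\vf\,dA$) such that $\|C_\vf f\|_{H^2}^2=\int_{\overline\DD}|f|^2\,d\mu_\vf$ for all $f\in K_\vt$. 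Hence $C_\vf\in\mS_p$ iff $I_{\mu_\vf}\in\mS_p$, and since $\mS_p$-membership forces compactness there is no need to separately assume boundedness of $C_\vf$. The problem thus splits into (i) a geometric description of $\{\mu:I_\mu\in\mS_p\}$ and (ii) translating that description, applied to $\mu=\mu_\vf$, from the language of $\mu_\vf$ into the language of $N_\vf$.

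For (i) I would invoke the geometric $\mS_p$-criterion of Section~3 (the refinement of Cohn's $E^2(D)$-extension, which also recovers \cite{B}). In the one-component case it supplies a $\vt$-adapted Whitney-type decomposition $\{(\lambda_n,\Delta_n)\}$ of a domain $D\supset\DD$, with $\Delta_n\subset\overline\DD$, $\lambda_n$ a representative point, and the cells comparable to pseudo-hyperbolic cells of $D$, for which
\[
I_\mu\in\mS_p\quad\Longleftrightarrow\quad \sum_n\bigl(\mu(\Delta_n)\,\|k_{\lambda_n}^\vt\|^2\bigr)^{p/2}<\infty .
\]
Passing from the sum to an integral via the fact that the counting measure of $\{\lambda_n\}$ is comparable to $\bigl(\tfrac{1-|\vt(z)|^2}{1-|z|^2}\bigr)^2dA(z)$ — the density where one-componentness enters, through $|\vt'(z)|\asymp\tfrac{1-|\vt(z)|^2}{1-|z|^2}$ and the regularity of the level sets $\{|\vt|<\delta\}$ — one gets
\[
I_\mu\in\mS_p\quad\Longleftrightarrow\quad \int_\DD\Bigl(\widehat\mu(z)\,\|k_z^\vt\|^2\Bigr)^{p/2}\Bigl(\tfrac{1-|\vt(z)|^2}{1-|z|^2}\Bigr)^2 dA(z)<\infty,\qquad \widehat\mu(z):=\mu\bigl(\Delta(z)\bigr),
\]
$\Delta(z)$ being the cell containing $z$. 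One must also check that any mass $\mu_\vf$ carries on $\T$ is harmless here; this is precisely what the $E^2(D)$-extension buys, since the extended operator sees that mass in the interior of $D$.

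The heart of the matter is (ii): showing that for $\mu=\mu_\vf$ one has, cell by cell and stably under a bounded dilation of the cells,
\[
\widehat{\mu_\vf}(z)\,\|k_z^\vt\|^2\;\asymp\;\frac{N_\vf(z)\,(1-|\vt(z)|)^2}{1-|z|^2}.
\]
Here I would use \cite{LQLR, LQLR2}: the sub-mean-value property of $N_\vf$ yields that $\mu_\vf$ of a Carleson-type region is comparable to the average of $N_\vf(z)/(1-|z|)$ over a fixed dilate of that region, in particular $\mu_\vf(\Delta(z))\asymp$ the corresponding average of $N_\vf$ over $\Delta(z)$; combined with $\|k_z^\vt\|^2=\tfrac{1-|\vt(z)|^2}{1-|z|^2}$ and $1-|\vt(z)|^2\asymp 1-|\vt(z)|$ this gives the displayed equivalence, and substituting into the integral criterion of step~(i) produces exactly the stated condition. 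The main obstacle, and the step needing the most care, is precisely this translation: the $\mu_\vf\leftrightarrow N_\vf$ comparison of \cite{LQLR, LQLR2} is naturally stated for ordinary Carleson boxes, whereas the cells $\Delta_n$ live in the $\vt$-adapted (non-Hardy) geometry of $D$ and are in general far from square, so one must check that the comparison — and the averaging of $N_\vf$ underpinning it — survives both the change of geometry and the subsequent non-linear $p/2$-power summation. It is also at this point that one-componentness is indispensable: the counterexample announced in the abstract shows that without it the cell geometry, and hence the passage from $\mu_\vf$ to $N_\vf$, genuinely fails.
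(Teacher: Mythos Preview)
Your outline matches the paper's approach: reduce $C_\vf$ to the embedding $I_{\mu_\vf}$, apply the Whitney-type $\mS_p$-criterion of Section~3, translate $\mu_\vf$ into $N_\vf$ via \cite{LQLR, LQLR2}, and use Aleksandrov's relation $\dist(z,\partial D_\delta)\sim(1-|z|^2)/(1-|\vt(z)|^2)$ to obtain the stated integral.

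One point needs sharpening, however. The displayed cell-by-cell equivalence
\[
\widehat{\mu_\vf}(z)\,\|k_z^\vt\|^2\;\asymp\;\frac{N_\vf(z)\,(1-|\vt(z)|)^2}{1-|z|^2}
\]
is too strong as a \emph{pointwise} statement: $N_\vf$ need not be approximately constant on a cell, so the passage from ``$\mu_\vf(\Delta(z))\asymp$ average of $N_\vf$'' to the displayed formula with $N_\vf(z)$ does not go through. What \cite{LQLR, LQLR2} actually give are one-sided inequalities on ordinary Carleson squares, each involving a fixed dilate $\tilde W_i$:
\[
\mu_\vf(W_i)^{p/2}\ \lesssim\ A(W_i)^{-1}\!\int_{\tilde W_i}N_\vf^{p/2}\,dA,\qquad \sup_{W_i}N_\vf\ \lesssim\ \mu_\vf(\tilde W_i).
\]
The paper resolves the geometry mismatch you flag not by transporting these estimates to abstract $\vt$-adapted cells, but by constructing a specific Whitney decomposition $\{G_i\}$ of $D_\delta$ whose cells \emph{are} ordinary Carleson squares (or their upper halves)---a ``good/bad square'' stopping-time construction---so that the LQLR inequalities apply directly. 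The remaining work, exactly the ``non-linear $p/2$-power summation'' you anticipate, is then an overlap argument proving
\[
\sum_i\Bigl(\frac{\mu_\vf(G_i)}{d(G_i)}\Bigr)^{p/2}\ \asymp\ \sum_i\Bigl(\frac{\mu_\vf(\tilde W_i)}{d(W_i)}\Bigr)^{p/2},
\]
which is immediate for $p\le 2$ by subadditivity but requires a H\"older trick for $p>2$. So your strategy is right and your identification of the obstacle is accurate; what is missing is the mechanism---the Carleson-square Whitney decomposition---that makes the translation go through without ever needing the pointwise comparison you wrote down.
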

\addtocounter{theorem}{-1}
} 

\medskip
The article is organized  as follows. The next section contains preliminary information about one-component inner functions and the corresponding model spaces. In Section 3 
we reduce the trace ideal problem of embedding operators on $K_\vt$ to a corresponding problem in the Hardy-Smirnov space in a larger domain, leading to a geometric characterization.   In Section 4 
 we use these results in order  to   describe when $C_\vf\in \mS_p$ in terms of the Nevanlinna counting function of $\vf$. We also give some geometric examples, illustrating the Schatten class behavior of composition operators on the Paley-Wiener space.  General model spaces are treated in Section 5, giving an example that the one-component requirement of Theorem \ref{thm:integralcond} can not be dropped, and providing sufficient conditions for $C_\vf$ to belong to $\mS_p$, $p\ge 2$. 


\section{Preliminaries}
\subsection{One-component inner functions}

  We recall that the class of {\em one-component} inner functions, introduced in \cite{C}, consists of those inner functions $\vt$ such that, for some $q_0\in (0,1)$, the set 
 \[
D_\epsilon=\{z\in\DD: |\vt(z)|<\epsilon\}
\]     
is connected for all $\eps\in (q_0,1)$. 
We refer the reader to   \cite{C} and \cite{C2} for the basic facts  about one-component inner functions.
For the remainder of this section  we fix a one-component function $\vt$ and a corresponding number $q_0$.

 Consider the canonical factorization of $\vt$,     
 \[
\vt(z)=B_\Lambda(z)  \exp \left ( -\int_\TT \frac {\xi+z}{\xi-z} d\omega (\xi)  \right ),
\]
 where $\Lambda$ is the zero set of $\vt$, $B_\Lambda$ is the corresponding Blaschke product, and $\omega$
is a singular measure on $\TT$.  Functions in $K_\vt$ admit analytic continuation through $\TT \setminus  \Sigma(\vt)$, 
where

\[
\Sigma(\vt) =  \left (\TT\cap {\rm Clos}(\Lambda) \right ) \cup {\rm supp}(\omega)
\]
is the {\em spectrum}  of $\vt$ (see \cite{Nik}, Lecture 3).  The function $\vt$ itself can be reflected over $\TT\setminus\Sigma(\vt)$ and thus definition of the domain $D_\eps$ makes sense for any $\eps\in (q_0,q_0^{-1})$.

 Our construction is based on the following results from \cite{C}.
 
\begin{proposition}
 \begin{itemize}
Let $\delta\in (1, q_0^{-1})$. Then 
\item  The set $\Dd$ is simply connected,  its boundary $\partial \Dd$ is a rectifiable  Jordan curve, and the linear measure on $\partial \Dd$ is a Carleson measure with respect to $\D^-=\{z: |z|>1\}$. 
\item Any function $f\in K_\vt$ extends analytically to a function in $E^2(\Dd)$ and 
\[
\|f\|_{K_\vt}  \simeq \|f\|_{E^2(\Dd)} , \quad  f \in K_\vt.
\]
\end{itemize}
 \end{proposition}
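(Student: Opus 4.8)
The plan is to follow Cohn \cite{C}, isolating the single analytic ingredient his proof needs and deducing the case $\delta\in(1,q_0^{-1})$ by reflection. Throughout, $\rho(z)=1/\bar z$ denotes the reflection in $\T$: it is a homeomorphism of the Riemann sphere and an anti-holomorphic isometry between the hyperbolic metrics $\frac{|dw|}{1-|w|^2}$ of $\D$ and $\frac{|dz|}{|z|^2-1}$ of $\D^-$, and $\vt\circ\rho=\rho\circ\vt$, so the reflected $\vt$ satisfies $|\vt(z)|=|\vt(\rho(z))|^{-1}$ for $z\in\D^-$.

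\emph{First claim.} Put $\eps:=1/\delta\in(q_0,1)$, so that $D_\eps$ is connected by the one-component hypothesis. The decisive input from \cite{C} is the lower estimate $|\vt'(w)|(1-|w|^2)\gtrsim 1-|\vt(w)|^2$, valid on $D_{\eps'}$ for every $\eps'\in(\eps,1)$ (the reverse inequality being Schwarz--Pick); since $1-|\vt|^2\simeq 1$ on $D_{\eps'}$ this reads $|\vt'|\simeq(1-|w|^2)^{-1}$ there. Hence $|\vt|$ has nonvanishing gradient on the level sets $\{|\vt|=c\}$, $\eps\le c<\eps'$, which are therefore smooth curves by the implicit function theorem, and together with the connectedness of the sublevel sets this yields that $D_\eps$ is a Jordan domain with rectifiable boundary; moreover along $\partial D_\eps$ one has $|dw|\simeq(1-|w|^2)\,|d(\arg\vt)|$, and a careful analysis of the oscillation of $\arg\vt$ inside Carleson boxes, exploiting the one-component structure, shows that linear measure on $\partial D_\eps$ is a Carleson measure for $\D$. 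This is the technical heart of \cite{C}. To pass to $\Dd$, note that $|\vt|<1<\delta$ on $\D$ and $|\vt|=1$ on $\T\setminus\Sigma(\vt)$, so $\D\cup(\T\setminus\Sigma(\vt))\subset\Dd$, while $\rho$ carries $\Dd\cap\D^-$ onto $\{|\vt|>\eps\}\cap\D=\D\setminus\overline{D_\eps}$ and $\Sigma(\vt)=\T\cap\overline{D_\eps}$ by \cite{C}; hence the complement of $\Dd$ in the Riemann sphere equals $\rho(\overline{D_\eps})$ and $\partial\Dd=\rho(\partial D_\eps)$. Applying the sphere homeomorphism $\rho$ together with the hyperbolic isometry, the three properties carry over: $\Dd$ is simply connected, $\partial\Dd$ is a rectifiable Jordan curve, and linear measure on $\partial\Dd$ is a Carleson measure for $\D^-$.

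\emph{Second claim.} Fix $f\in\kt$. Since $f\perp\vt\hd$, the boundary function $\vt\bar f\in L^2(\T)$ lies in $z\hd$; let $\tilde f\in\hd$ be the corresponding analytic function, so that $|\tilde f|=|f|$ a.e.\ on $\T$, $\|\tilde f\|_{\hd}=\|f\|_{\kt}$, and $f=\vt\,\overline{\tilde f}$ a.e.\ on $\T$. Its reflection $\tilde f^\ast(z):=\overline{\tilde f(1/\bar z)}$ belongs to $H^2(\D^-)$ with $\|\tilde f^\ast\|_{H^2(\D^-)}=\|f\|_{\kt}$, and since $\tilde f^\ast=\overline{\tilde f}$ on $\T$ while $f$, $\vt$ and $\tilde f^\ast$ all continue analytically across $\T\setminus\Sigma(\vt)$, a comparison of boundary values shows that the analytic continuation of $f$ equals $\vt\,\tilde f^\ast$ on $\Dd\cap\D^-$; in particular $|f|=\delta|\tilde f^\ast|$ on $\partial\Dd\cap\D^-$. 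Using the Carleson property from the first claim,
\[
\int_{\partial\Dd}|f|^2\,|dz|=\delta^2\!\int_{\partial\Dd\cap\D^-}|\tilde f^\ast|^2\,|dz|+\int_{\Sigma(\vt)}|f|^2\,|dz|\;\lesssim\;\|\tilde f^\ast\|_{H^2(\D^-)}^2+\|f\|_{\kt}^2\;\lesssim\;\|f\|_{\kt}^2,
\]
so $f\in E^2(\Dd)$ with $\|f\|_{E^2(\Dd)}\lesssim\|f\|_{\kt}$. The reverse inequality $\|f\|_{\kt}\lesssim\|f\|_{E^2(\Dd)}$ is obtained similarly: transplanting to $\D$ by a Riemann map $\phi:\D\to\Dd$ one gets $g:=(f\circ\phi)\sqrt{\phi'}\in\hd$ with $\|g\|_{\hd}\simeq\|f\|_{E^2(\Dd)}$ and, after a change of variables, $\|f\|_{\kt}^2\simeq\int_{\phi^{-1}(\T)}|g|^2\,|dz|$, where $\phi^{-1}(\T)\subset\D$ is a Jordan curve meeting $\partial\D$ only over $\Sigma(\vt)$; the derivative estimate controls how $\partial\Dd$ approaches $\T$ there, and it follows that arc length on $\phi^{-1}(\T)$ is a Carleson measure for $\hd$ (alternatively one may argue directly from the subharmonicity of $|f|^2$ on $\Dd\supset\T$).

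The main obstacle is exactly the first claim: Cohn's lower derivative estimate on $D_{\eps'}$ and the one-component combinatorics that upgrade it to the Carleson bound for $\mathcal H^1|_{\partial D_\eps}$. Once these are granted, the remainder — bookkeeping with the reflection $\rho$, the identity $f=\vt\,\tilde f^\ast$, and one conformal change of variables — is routine.
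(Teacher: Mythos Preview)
The paper does not give its own proof of this proposition; it is stated as a result from Cohn \cite{C} and simply cited. Your proposal is therefore not competing with a proof in the paper but rather filling in Cohn's argument, and your reconstruction of the first claim (reflection of $\partial D_\eps$, the derivative estimate $|\vt'|\simeq(1-|w|)^{-1}$, and the Carleson property transported by $\rho$) and of the forward bound $\|f\|_{E^2(\Dd)}\lesssim\|f\|_{\kt}$ via $f=\vt\,\tilde f^\ast$ on $\Dd\cap\D^-$ is accurate and matches Cohn's approach.

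The one soft spot is the reverse inequality $\|f\|_{\kt}\lesssim\|f\|_{E^2(\Dd)}$. Your subharmonicity remark does not give an $L^2$ bound on $\T$, and the claim that arc length on $\phi^{-1}(\T)$ is Carleson for $\hd$ is asserted rather than argued. It is worth noting that the paper does supply a clean proof of exactly this inequality, but in the proof of the \emph{next} proposition (Proposition~\ref{l:kth}): for $h\in L^2(\T)$ supported away from $\Sigma(\vt)$ one writes, via Cauchy's formula,
\[
\int_\T f h\,ds=\int_{\partial\Dd} f(\zeta)\,H(\zeta)\,d\zeta,\qquad H(\zeta)=\int_\T\frac{h(z)}{\zeta-z}\,ds(z)\in H^2_-(\D),
\]
and then invokes the Carleson property of $\partial\Dd$ for $\D^-$ (your first claim) to bound $\|H\|_{L^2(\partial\Dd)}\lesssim\|h\|_{L^2(\T)}$. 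This duality argument is shorter and uses only what you have already established; you could substitute it for your conformal-transplant sketch.
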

Here and in what follows $E^2(\Dd)$ and  $E^2_-(\Dd)$ denote  the Hardy-Smirnov spaces in the interior and exterior of $\Dd$ (see e.g. \cite{D}). We mention that functions $g \in E^2_-(\Dd)$ are required to assume the value $0$ at $\infty$.

Recall that a rectifiable curve $\Gamma$ is called {\em Ahlfors regular}  if for each $z\in \CC$ and $r>0$ we have 
 $H^1(\Gamma\cap B(z,r))< C r$, where $H^1(\cdot)$ denotes arc length, $B(z,r)=\{\zeta: |\zeta-z|<r\}$, and $C = C(\Gamma) > 0$ is some constant.

\begin{lemma}\label{l:Ahl}
Let $\delta\in (1, q_0^{-1})$. Then the boundary of $D_\delta$ satisfies the Ahlfors regularity condition.
\end{lemma}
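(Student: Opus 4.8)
The plan is to prove the uniform bound $H^1(\partial D_\delta\cap B(z,r))\le Cr$, $C=C(\vt,\delta)$, by combining the Carleson property of the Proposition, which governs the part of $\partial D_\delta$ in a boundary layer around $\TT$, with a direct estimate for the part of $\partial D_\delta$ lying deep inside $\D^-$, where $\vt$ reflects to a well-behaved analytic function. Two preliminary reductions. Since $\vt$ extends across $\TT\setminus\Sigma(\vt)$ with modulus exceeding $1$ only on the $\D^-$ side, the curve $\partial D_\delta$ is contained in $\{1\le|z|\le 1+\eta\}$ for some $\eta=\eta(\vt,\delta)>0$; in particular it is bounded, of finite length, and $\partial D_\delta\cap\TT\subset\Sigma(\vt)$. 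Hence the inequality is trivial once $r\gtrsim\diam\partial D_\delta$, so we may assume $r$ small and $B(z,r)\cap\partial D_\delta\ne\emptyset$; replacing $z$ by a point $w\in\partial D_\delta\cap B(z,r)$ and $r$ by $2r$, it suffices to bound $H^1(\partial D_\delta\cap B(w,r))$ for $w\in\partial D_\delta$ and $r$ small. Fix a large constant $K=K(\vt,\delta)$, put $d=\dist(w,\TT)$, and split according to whether $d<Kr$ or $d\ge Kr$.

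In the first case $B(w,r)\subset B(\xi,(K+1)r)$ for a suitable $\xi\in\TT$; since $\partial D_\delta\subset\overline{\D^-}$ and, by the Proposition, arc length on $\partial D_\delta$ is a Carleson measure for $\D^-$ (the part on $\TT$ itself contributing nothing, being inside $\Sigma(\vt)$), we obtain $H^1(\partial D_\delta\cap B(w,r))\le C(K)r$.

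In the second case $B(w,2r)\subset\D^-$ lies at distance $\asymp d$ from $\TT$, and $\vt$ reflects there to $\vt^*(\zeta)=1/\overline{\vt(1/\overline\zeta)}$, meromorphic with poles only at $1/\overline\lambda$, $\lambda\in\Lambda$. Since $|\vt(1/\overline w)|=1/\delta$, a Schwarz--Pick estimate for $\vt$ on the disk of radius $\asymp d$ about $1/\overline w\in\DD$ shows (i) that $\vt^*$ is in fact holomorphic on $B(w,c_0d)$ for a suitable $c_0=c_0(\vt,\delta)>0$ — a zero of $\vt$ within distance $\asymp c_0d$ of $1/\overline w$ would make $|\vt|$ vary by $1/\delta$ across a distance $\lesssim c_0d$ on which $|\vt'|\lesssim 1/d$, which is impossible once $c_0$ is small — and (ii) that $|(\vt^*)'|\lesssim 1/d$ on $B(w,c_0d)$, with $|\vt^*|$ so close to $\delta$ there (after shrinking $c_0$) that all its values lie in $(1,q_0^{-1})$; enlarging $K$ we arrange $B(w,2r)\subset B(w,c_0d)$. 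The key point is the matching lower bound $|(\vt^*)'|\asymp 1/d$ on $B(w,c_0d)$. It relies on the one-component hypothesis through two facts: the sets $\{|\vt|<s\}$, $s\in(1,q_0^{-1})$, have Jordan boundaries by the Proposition, so $\vt^*$ has no critical point with critical value of modulus in $(1,q_0^{-1})$ and in particular none on $B(w,c_0d)$; and, quantitatively, one has the two-sided estimate $|\vt'(\zeta)|\asymp\frac{1-|\vt(\zeta)|^2}{1-|\zeta|^2}$ uniformly on $\{\,|\vt(\zeta)|\ge q_1\,\}$, $q_1>q_0$, a well-known property of one-component inner functions; transplanting the latter along $\{|\vt|=1/\delta\}$ to the reflected side gives $|(\vt^*)'|\asymp 1/d$. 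With both bounds and a Cauchy estimate ($|(\vt^*)''|\lesssim 1/(dr)$ on $B(w,r)$) one sees that $(\vt^*)'$ changes by less than half its modulus on some ball $B(w,c_1r)$ with $c_1$ a fixed fraction, whence $\vt^*$ is univalent there and bi-Lipschitz onto its image with both constants $\asymp 1/d$. Therefore $\partial D_\delta\cap B(w,c_1r)=(\vt^*)^{-1}(\delta\TT)\cap B(w,c_1r)$ is the $(\vt^*)^{-1}$-image of a subarc of $\delta\TT$ of length $\lesssim r/d$, so it has $H^1$-measure $\lesssim d\cdot(r/d)=r$; covering $\partial D_\delta\cap B(w,r)$ by $\lesssim c_1^{-2}$ such balls centred on the curve (each centre still at distance $\asymp d$ from $\TT$) and summing gives $H^1(\partial D_\delta\cap B(w,r))\lesssim r$, as required.

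I expect the main obstacle to be the lower derivative estimate $|\vt'(\zeta)|\gtrsim\frac{1-|\vt(\zeta)|^2}{1-|\zeta|^2}$ on $\{|\vt|\ge q_1\}$: the absence of critical points alone is not enough (a bounded holomorphic function can be nearly constant yet have level sets of arbitrarily large length), so the one-component hypothesis must enter here in an essential, quantitative way — indeed an estimate of this type is close to characterising one-component inner functions, and if one does not wish to invoke it as known it has to be derived from the geometry of the level domains $D_\epsilon$.
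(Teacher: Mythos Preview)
Your argument is correct and shares the paper's overall architecture: handle the regime $\dist(w,\TT)\lesssim r$ by the Carleson property, and the regime $\dist(w,\TT)\gg r$ by local analysis of the level curve. The difference lies in that local step. The paper passes by reflection to the level set $\Gamma_q=\vt^{-1}(\{|z|=q\})\subset\DD$, $q=\delta^{-1}$; it then invokes Cohn's Theorem~1.1 and Lemma~2.1 only to obtain \emph{univalence} of $\vt$ on a ball $B(z,\gamma(1-|z|))$, and finishes with the Hayman--Wu theorem, which bounds the length of the preimage of any line or circle under a univalent map by the radius of the disk --- no lower derivative estimate is needed. You instead push for bi-Lipschitz control via the two-sided bound $|\vt'(\zeta)|\asymp(1-|\vt(\zeta)|^2)/(1-|\zeta|^2)$ on $\{|\vt|\ge q_1\}$; this is indeed a known consequence of the one-component hypothesis (it is essentially Cohn's Theorem~1.1, and equivalent to Aleksandrov's distance formula~\eqref{eq:aleksandrov} used later), so your proof closes, but the route is heavier: you correctly flag the lower bound as the delicate point, and it is precisely the step the Hayman--Wu argument sidesteps. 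The Hayman--Wu approach is more modular (univalence + a black-box length theorem), while yours is more self-contained once the derivative estimate is granted. A minor remark: your Cauchy estimate $|(\vt^*)''|\lesssim 1/(dr)$ is obtained on the small ball $B(w,r)$; using the larger ball $B(w,c_0d)$ would give the sharper $|(\vt^*)''|\lesssim 1/d^2$, but either suffices for the univalence conclusion.
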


\begin{proof}
Denote $q=\delta^{-1}$.
The boundary of $D_\delta$ is a rectifiable Jordan curve that is the reflection of the curve $\partial D_q=\vt^{-1}(|z|=q)\cup \Sigma(\vt) $ with respect to the unit circle, see \cite{C}. 

First, if $B(z,r)$ is a ball centered at a point $z \in \Sigma(\vt)$,  $r > 0$ sufficiently small, there exists a constant $C > 0$ such that the Carleson box $S\left( \tilde{z} \right)$ centered at $\tilde{z} = (1-Cr)z$ contains $B(z,r) \cap \partial D_q$. Since the arc length on $\partial D_q$ is a Carleson measure, we obtain
\begin{equation*}
H^1(B(z,r) \cap \partial D_q) \leq H^1( S(\tilde{z}) \cap \partial D_q) \lesssim 1-|\tilde{z}| \leq Cr,
\end{equation*}
which is precisely the Ahlfors regularity estimate for points $z \in \Sigma(\vt)$.

For points in the level set $\Gamma_q=\vt^{-1}(|z|=q)$, we again want to show the existence of constant $C$, independent of $z$, such that $H^1(\Gamma_q\cap B(z,r))\le Cr$, for all $z\in\Gamma_q$. By the same argument as in the previous paragraph, for a fixed $a > 0$ we have desired estimate for all balls with radius $r>a(1-|z|)$. For comparatively smaller balls, we note that Theorem 1.1 and Lemma 2.1 of \cite{C} imply that there exists $\gamma=\gamma(q)$ such that for any $z\in \Gamma_q$ the restriction $\vt|B(z, \gamma(1-|z|))$ is univalent. The proof is then completed by the Hayman-Wu theorem \cite{HW}, since it implies that $H^1( \vt^{-1}(|z|=q)\cap B(z,r))\lesssim r$ whenever $r<\gamma (1-|z|)$.
\end{proof}

\begin{corollary} \label{cor:proj}
The space $L^2(\partial \Dd)$ admits the direct sum decomposition
\[
L^2(\partial \Dd)=E^2(\Dd)\dotplus E^2_-(\Dd).
\]
The corresponding projectors $P_\pm$ are bounded and have the form
\beq
\label{proj}
P_\pm f(z)= \pm \frac 12 f(z) + \frac 1 {2i\pi }  \int_{\partial \Dd}  
                 \frac {f(\zeta)}{\zeta-z} d\zeta, \quad z \in \partial \Dd.
\eeq
\end{corollary}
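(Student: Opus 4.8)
The plan is to prove Corollary~\ref{cor:proj} by combining Lemma~\ref{l:Ahl} with the classical $L^2$-boundedness theory of the Cauchy integral on Ahlfors regular (more precisely, chord-arc/Lipschitz-type) curves. First I would recall that $\partial\Dd$ is a rectifiable Jordan curve, so it divides the plane into two Jordan domains, the bounded one being $\Dd$ itself and the unbounded one containing $\infty$; on such a curve the classical Smirnov decomposition theory applies, and the Hardy--Smirnov spaces $E^2(\Dd)$ and $E^2_-(\Dd)$ are well defined, with the latter normalized to vanish at $\infty$. The direct sum statement $L^2(\partial\Dd)=E^2(\Dd)\dotplus E^2_-(\Dd)$ is then equivalent to saying that the Cauchy (Plemelj) projection $P_+$, given by the principal value singular integral in \eqref{proj}, is a bounded operator on $L^2(\partial\Dd)$, since $P_-=I-P_+$ and the jump relations identify the images with the two Smirnov classes.

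The key step is therefore the boundedness on $L^2(\partial\Dd)$ of the Cauchy singular integral operator
\[
Tf(z)=\mathrm{p.v.}\,\frac{1}{2i\pi}\int_{\partial\Dd}\frac{f(\zeta)}{\zeta-z}\,d\zeta.
\]
Here I would invoke the deep theorem of Coifman--McIntosh--Meyer (or, equivalently in this setting, David's characterization): the Cauchy integral is bounded on $L^2(\Gamma)$ for a rectifiable curve $\Gamma$ if and only if $\Gamma$ is Ahlfors regular. Lemma~\ref{l:Ahl} supplies exactly the Ahlfors regularity of $\partial\Dd$, so $T$ is bounded, and hence so are $P_\pm=\pm\tfrac12 I+T$. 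One should also note that $\partial\Dd$, being the reflection of a chord-arc-type curve, is in fact a Jordan curve, which guarantees that the Smirnov domains on either side are genuine and that $E^2(\Dd)\cap E^2_-(\Dd)=\{0\}$ (a function analytic and of Smirnov class on both sides with $L^2$ boundary values, vanishing at $\infty$, must be $0$); combined with the Plemelj formulas $P_+f\in E^2(\Dd)$, $P_-f\in E^2_-(\Dd)$, and $f=P_+f+P_-f$, this yields the direct sum.

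Concretely I would carry out the steps in this order: (1) observe $\partial\Dd$ is a rectifiable Jordan curve (from the Proposition) and recall the definitions and basic Plemelj jump relations for the Cauchy integral on such curves; (2) quote Lemma~\ref{l:Ahl} to get Ahlfors regularity; (3) apply the Coifman--McIntosh--Meyer theorem to conclude $L^2$-boundedness of the Cauchy singular integral, hence of $P_\pm$ as given by \eqref{proj}; (4) use the jump relations to see $\operatorname{ran}P_+\subseteq E^2(\Dd)$ and $\operatorname{ran}P_-\subseteq E^2_-(\Dd)$, with $P_+$ acting as the identity on $E^2(\Dd)$ boundary values and annihilating $E^2_-(\Dd)$ boundary values (and vice versa), which gives $P_+^2=P_+$, $P_+P_-=0$; (5) conclude $L^2(\partial\Dd)=E^2(\Dd)\dotplus E^2_-(\Dd)$ with complementary bounded projections. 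The main obstacle is purely a matter of making sure the hypotheses of the Cauchy-integral boundedness theorem are met and that the Smirnov-class identification of the ranges is legitimate on this particular curve; once Ahlfors regularity is in hand (Lemma~\ref{l:Ahl}) and we know $\partial\Dd$ is a genuine rectifiable Jordan curve (so that the Smirnov theory and the uniqueness $E^2\cap E^2_-=\{0\}$ hold), everything else is the standard Calder\'on--Zygmund / Plemelj package, so there is no serious new difficulty—only the bookkeeping of citing the right classical results.
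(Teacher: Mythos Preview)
Your proposal is correct and follows exactly the paper's approach: the paper's proof is a single sentence stating that Ahlfors regularity (Lemma~\ref{l:Ahl}) plus David's theorem \cite{David} gives the $L^2$-boundedness of $P_\pm$, with the rest of the decomposition implicitly left as standard. You have simply spelled out in more detail the Plemelj/Smirnov bookkeeping that the paper calls ``straightforward.''
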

The proof is straightforward; it suffices to mention that the Ahlfors regularity yields the boundedness  
of $P_\pm$ in $L^2(\partial \Dd)$ by David's theorem \cite{David}.


\subsection{Model spaces}
  Each function $f\in K_\vt=H^2(\DD)\cap \vt H_{-}^2(\DD)$ admits an extension to a function in  $E^2(D_\delta)$. Denote by $J$ the operator of 
  analytic prolongation  from $\D$ to $\Dd$,  and let $\K_{{\vt}}=E^2(D_\delta)\cap {\vt}E^2_-(D_\delta)$.

\begin{proposition}
\label{l:kth}
  $J(K_\vt)=\K_{{\vt}} $.
\end{proposition}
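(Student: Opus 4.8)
The plan is to establish the two inclusions $J(K_\vt)\subseteq\K_\vt$ and $\K_\vt\subseteq J(K_\vt)$ separately, using the identity $K_\vt=H^2(\DD)\cap\vt H^2_-(\DD)$, the analytic continuation of elements of $K_\vt$ and of $\vt$ itself through $\TT\setminus\Sigma(\vt)$, and the geometry of $\Dd$ recorded above. Two remarks are used throughout. First, since $|\vt|<1<\delta$ on $\DD$ and $\vt$ continues with $|\vt|=1$ on $\TT\setminus\Sigma(\vt)$, we have $\overline{\DD}\subseteq\overline{\Dd}$; hence the exterior of $\Dd$ lies in $\D^-\cup\{\infty\}$, the set $\partial\Dd\setminus\TT$ is the arc system $\Gamma:=\partial\Dd\cap\D^-$ on which $\vt$ is analytic, zero-free and of modulus $\delta$, all the zeros $\Lambda$ of $\vt$ lie in $\DD$, and $1<|\vt|<\delta$ throughout $\Dd\setminus\overline{\DD}$. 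Second, the restriction map $E^2(\Dd)\to H^2(\DD)$, $F\mapsto F|_\DD$, is bounded: this is the geometric content of the estimate $\|f\|_{K_\vt}\lesssim\|f\|_{E^2(\Dd)}$ from the Proposition of Cohn quoted above, and follows from the Ahlfors regularity of $\partial\Dd$ (Lemma \ref{l:Ahl}) by the same Cauchy-integral considerations as in Corollary \ref{cor:proj}.

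For $J(K_\vt)\subseteq\K_\vt$, let $f\in K_\vt$; then $Jf\in E^2(\Dd)$ by the Proposition, so it suffices to produce $g\in E^2_-(\Dd)$ with $Jf=\vt g$ a.e.\ on $\partial\Dd$. Write $f=\vt g$ on $\TT$ with $g\in H^2_-(\DD)$. Since the exterior of $\Dd$ lies in $\D^-$, $g$ restricts there to an analytic function vanishing at $\infty$; it belongs to $E^2_-(\Dd)$ because $\int_{\partial\Dd}|g|^2\,ds\lesssim\|g\|^2_{H^2_-(\DD)}$ by the Carleson-curve property of $\partial\Dd$ with respect to $\D^-$ (approximate $g$ on the exterior of $\Dd$ by its dilates, which extend analytically past $\partial\Dd$, and pass to the limit in $L^2(\partial\Dd)$ by dominated convergence against the harmonic majorant of $|g|^2$). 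On the region $\Dd\setminus\overline{\DD}$ lying between $\TT$ and $\Gamma$ the functions $Jf$, $\vt$ and $g$ are all analytic, and the identity $f=\vt g$ on $\TT$ propagates by analytic continuation to $Jf=\vt g$ there; since $\vt$ and $g$ are also analytic across $\Gamma$, letting $z\to\Gamma$ gives $(Jf)^\ast=\vt g$ on $\Gamma$, hence $Jf=\vt g$ a.e.\ on $\partial\Dd$. Thus $Jf\in\vt E^2_-(\Dd)$ and $Jf\in\K_\vt$.

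For $\K_\vt\subseteq J(K_\vt)$, let $F\in\K_\vt$, write $F=\vt g$ on $\partial\Dd$ with $g\in E^2_-(\Dd)$, and put $f:=F|_\DD$. Since $F$ is analytic on $\Dd\supseteq\DD$ it is the analytic continuation of $f$, so $Jf=F$ and it suffices to show $f\in K_\vt$. First, $f=F|_\DD\in H^2(\DD)$ by the restriction estimate. For the condition $\bar\vt f\in H^2_-(\DD)$ we reverse the previous argument: as $\vt$ is analytic and zero-free across $\Gamma$, the identity $F=\vt g$ on $\Gamma$ together with a standard reflection argument (Privalov's uniqueness theorem and Morera) glues $g$, defined on the exterior of $\Dd$, to $F/\vt$, defined on $\Dd\setminus\overline{\DD}$, into a single function $\Phi$ analytic on $\D^-$ with $\Phi(\infty)=0$; moreover $\Phi$ continues analytically across $\TT\setminus\Sigma(\vt)$ with $\Phi=F/\vt=\bar\vt f$ there, it lies in the Smirnov class $N^+(\D^-)$ since it is assembled from functions in the Smirnov classes of the exterior of $\Dd$ and of $\Dd$, and it has $L^2(\TT)$ boundary values $\bar\vt f$ because $f\in H^2(\DD)$. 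By Smirnov's theorem $\Phi\in H^2_-(\DD)$, so $\bar\vt f\in H^2_-(\DD)$; hence $f\in K_\vt$ and $F=Jf\in J(K_\vt)$.

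The bookkeeping of analytic continuations is routine; the substantive point, and the main obstacle, is the two-way transfer of $L^2$-integrability between the Hardy spaces $H^2(\DD)$ and $H^2_-(\DD)$ of the disk and its complement on one side and the Smirnov spaces $E^2(\Dd)$ and $E^2_-(\Dd)$ of the enlarged domain on the other, along the part of $\partial\Dd$ that is close to $\TT$. This is exactly where the Carleson-measure property of arclength on $\partial\Dd$ with respect to $\D^-$ and the Ahlfors regularity from Lemma \ref{l:Ahl} enter, since they make harmonic measure and arclength comparable on the relevant curves and thereby let subharmonic and Smirnov-class estimates pass in both directions. A secondary nuisance is the spectrum $\Sigma(\vt)$, where $\vt$ and the functions involved fail to continue and where $\partial\Dd$ meets $\TT$: one first checks the boundary identities on the open arc system $\partial\Dd\setminus\Sigma(\vt)$ by analytic continuation and then extends them across $\Sigma(\vt)$ by matching nontangential boundary values, which is legitimate because the zeros $\Lambda$ of $\vt$ are confined to $\DD$ and $\Dd\setminus\overline{\DD}$ stays off them.
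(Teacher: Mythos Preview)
Your overall architecture is sound and matches the paper's: both inclusions via the identification $K_\vt = H^2(\DD)\cap\vt H^2_-(\DD)$, with analytic-continuation gluing across the annulus $\Dd\setminus\overline{\DD}$. Your treatment of the forward inclusion $J(K_\vt)\subseteq\K_\vt$ and your Smirnov-class argument for $\Phi\in H^2_-(\DD)$ are in fact more explicit than the paper's, which simply remarks that $1\le|\vt|\le\delta$ on $\Dd\setminus\DD$ forces $g=F/\vt$ to extend to $H^2_-(\DD)$.

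The gap is the step you label the ``restriction estimate'': boundedness of $E^2(\Dd)\to H^2(\DD)$, equivalently that arclength on $\TT$ is a Carleson measure for $E^2(\Dd)$. Neither of your justifications works as written. Cohn's proposition asserts $\|f\|_{K_\vt}\simeq\|f\|_{E^2(\Dd)}$ only for $f\in K_\vt$, not for an arbitrary $F\in E^2(\Dd)$; invoking it here is circular, since you are trying to show $F|_\DD\in H^2$ \emph{before} knowing $F|_\DD\in K_\vt$. And Ahlfors regularity of $\partial\Dd$ (David's theorem, Corollary~\ref{cor:proj}) yields boundedness of the Cauchy projections on $L^2(\partial\Dd)$, which does not by itself control $\int_\TT|F|^2\,ds$ for $F\in E^2(\Dd)$: the circle $\TT$ lies inside $\Dd$ and approaches $\partial\Dd$ at $\Sigma(\vt)$, so what is needed is a Carleson embedding for an interior curve, not an $L^2(\partial\Dd)$ estimate.

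The paper fills exactly this gap with a short duality argument. For $h\in L^2(\TT)$ compactly supported off $\Sigma(\vt)$, Cauchy's formula gives
\[
\int_\TT F(z)h(z)\,ds=\int_{\partial\Dd}F(\zeta)\,H(\zeta)\,d\zeta,\qquad H(\zeta)=\int_\TT\frac{h(z)}{\zeta-z}\,ds(z)\in H^2_-(\DD);
\]
now the Carleson property of arclength on $\partial\Dd$ with respect to $H^2_-(\DD)$ (this is Cohn's proposition, first bullet) yields $\|H\|_{L^2(\partial\Dd)}\lesssim\|h\|_{L^2(\TT)}$, hence $\bigl|\int_\TT Fh\,ds\bigr|\lesssim\|F\|_{E^2(\Dd)}\|h\|_{L^2(\TT)}$. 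This is the one substantive ingredient missing from your proof; once you insert it, the remainder of your argument goes through.
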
 
\begin{proof} The inclusion  $J(K_\vt)\subset \K_{{\vt}}$ follows from Cohn's extension construction \cite{C} which shows that $J(K_\vt)\subset E^2(D_\delta)$, and the observation that if $f\in K_\vt$ then $\vt^{-1}f$ has an analytic continuation in $\CC\setminus\DD$.
 
 In order to prove the opposite inclusion we first observe that    the linear measure $ds = |dz|$ on $\T$ is a Carleson measure  for  $E^2(D_\delta)$. In other words
 \[
 \int_\T |f(z)|^2 ds \lesssim \|f\|^2_{E^2(D_\delta)}, \quad f\in E^2(D_\delta).
 \]
  Indeed, let $f \in E^2( D_\delta)$. It is sufficient to bound $\int_\T fh \, ds$ for $h \in L^2(\T)$ that are compactly supported in $\T \setminus \Sigma(\vartheta)$. For such $h$ we have, by Cauchy's formula 
  \begin{equation*}
\int_\T f(z)h(z) \, ds = \int_{\partial D_\delta} f(\zeta) \int_\T \frac{h(z)}{\zeta-z} \, ds(z) \, d\zeta.
\end{equation*}
The function  $H(\zeta) =  \int_\T \frac{h(z)}{\zeta-z} \, ds(z)$ belongs to $H^2_-(\D)$.  Since the arc length measure on $\partial \Dd$ is a Carleson measure for $H^2_-(\D)$ we have 
\[
\|H\|_{L^2(\partial D_\delta)} \lesssim \|H\|_{H^2_-(\D)} \lesssim \|h\|_{L^2(\T)}
\]  and therefore
\begin{equation*}
\left| \int_\T f(z)h(z) \, ds \right| \lesssim  \|f\|_{E^2(\partial D_\delta)} \|h\|_{L^2(\T)},
\end{equation*}
which is the required estimate.
   
The proposition now follows readily. Indeed, let $f\in \K_{{\vt}}=E^2(D_\delta)\cap {\vt}E^2_-(D_\delta)$, so that $f|_\D \in H^2(\D)$ and
$f=\vt g$, where $g\in E_-^2(D_\delta)$. Since $f$ and $\vt$ are holomorphic and $1\leq |\vt(z)|\leq \delta$ in $\Dd\setminus\D$, we see that $g$ admits prolongation to a function in $H_-^2(\D)$. Hence $f|_\D\in H^2(\D) \cap \vt H_-^2(\D)= K_\vt$.
 \end{proof} 

\section{Operator extension}
\subsection{Embeddings of $ E^2(D_\delta)$} We have seen that  $K_\vt$ can be considered as a subspace of $E^2(\Dd)$.
The following theorem reduces trace ideal problems for embeddings of $K_\vt$ to trace ideal problems for embeddings of the whole space $E^2(D_\delta)$. For a positive measure $\mu$, we denote by $I_\mu$ the embedding operator (inclusion map) from a space into $L^2(\mu)$.

\begin{theorem}\label{th:emb}
Let $\vt$ be a one-component inner function, $\mu$ be a positive measure on $\overline{\DD} \setminus \Sigma(\vt)$, and $0 < p < \infty$. Then the embedding $I_\mu: K_\vt\rightarrow L^2(\mu)$ is bounded, compact, or belongs to $\mS_p$ if and only if $I_\mu : E^2(D_\delta)\rightarrow L^2(\mu)$ is bounded, compact, or belongs to $\mS_p$, respectively.
\end{theorem}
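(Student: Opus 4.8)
The plan is to leverage Proposition~\ref{l:kth}, which identifies $K_\vt$ (via analytic prolongation $J$) with the subspace $\K_\vt = E^2(D_\delta) \cap \vt E^2_-(D_\delta)$ of $E^2(D_\delta)$. Since $\mu$ is supported in $\overline\DD \setminus \Sigma(\vt)$, and every function in $E^2(D_\delta)$ restricts to a genuine holomorphic function on a neighborhood of $\overline\DD \setminus \Sigma(\vt)$, the embedding $I_\mu : E^2(D_\delta) \to L^2(\mu)$ makes sense; restricting it to the closed subspace $\K_\vt$ gives (up to the unitary $J$) exactly $I_\mu : K_\vt \to L^2(\mu)$. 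The ``only if'' direction is then the easy half: if $I_\mu$ is bounded/compact/in $\mS_p$ on all of $E^2(D_\delta)$, then its restriction to the subspace $\K_\vt$ inherits the same property, because all these classes are stable under restriction to (and composition with the orthogonal projection onto) a closed subspace. So the real content is the ``if'' direction: boundedness, compactness, or $\mS_p$-membership of $I_\mu$ on the \emph{subspace} $K_\vt$ forces the same for the embedding of the \emph{whole} space $E^2(D_\delta)$.

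To get the reverse implication, the strategy is to express $I_\mu$ on $E^2(D_\delta)$ in terms of $I_\mu$ on $K_\vt$ composed with bounded operators. First I would use Corollary~\ref{cor:proj}: the Cauchy projectors $P_\pm$ decompose $L^2(\partial D_\delta) = E^2(D_\delta) \dotplus E^2_-(D_\delta)$ boundedly. Multiplication by $\vt$ is bounded and invertible from $E^2_-(D_\delta)$ onto itself on the region where $1 \le |\vt| \le \delta$ (as used already in the proof of Proposition~\ref{l:kth}), so one builds a bounded projection-type operator $Q : E^2(D_\delta) \to \K_\vt$ onto the model subspace: concretely, $Q f = f - \vt P_+(\vt^{-1} f)$ or a similar Cohn-type formula, designed so that $Q f \in E^2(D_\delta)$, $\vt^{-1}(f - Qf)$ extends holomorphically across $\D$ into $H^2_-(\D)$, hence $Qf \in \K_\vt$, while $f - Qf = \vt g$ with $g \in E^2_-(D_\delta) \cap H^2_-(\D)$, i.e., $g$ is holomorphic on all of $\CC \setminus \{\infty\}$ minus nothing — in fact holomorphic on a neighborhood of $\overline\DD$. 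The point is that $f - Qf$ is, near $\overline\DD \setminus \Sigma(\vt)$, a uniformly controlled holomorphic function: it is $\vt$ times an $H^2_-(\D)$ function, and on $\overline\DD$ we have $|\vt|$ bounded above and below away from $\Sigma(\vt)$, so $\|(I-Q)f\|_{L^2(\mu)} \lesssim \|f\|_{E^2(D_\delta)}$ for every such $\mu$, i.e. the ``error'' embedding $I_\mu(I-Q)$ is automatically bounded — indeed compact, indeed in every $\mS_p$ — because it factors through restriction to $\overline\DD$ of functions that are uniformly bounded and equicontinuous there.

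Granting such a bounded $Q : E^2(D_\delta) \to \K_\vt \cong K_\vt$ together with the fact that $I_\mu(I-Q) : E^2(D_\delta) \to L^2(\mu)$ lies in $\bigcap_{p>0}\mS_p$ (in particular is compact and bounded), the identity
\[
I_\mu = I_\mu Q + I_\mu (I - Q)
\]
shows that if $I_\mu|_{K_\vt}$ is bounded/compact/in $\mS_p$, then $I_\mu Q$ has the same property (composition of the given operator on $\K_\vt$ with the bounded $Q$), and adding the harmless second term $I_\mu(I-Q)$ — which is bounded, compact, and in $\mS_p$ for \emph{all} $p$ — preserves it, since each of $\mathcal{B}$, $\mathcal{S}_\infty$, $\mathcal{S}_p$ is a (quasi-)ideal closed under addition. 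This gives $I_\mu$ bounded/compact/in $\mS_p$ on $E^2(D_\delta)$, completing the equivalence.

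The main obstacle I anticipate is constructing the bounded splitting operator $Q$ and proving that $I-Q$ is as well-behaved as claimed: one must check that the Cohn-type formula genuinely lands in $\K_\vt$ (the delicate point being that $\vt^{-1}(f-Qf)$ extends across $\T$, which requires the reflection of $\vt$ across $\T \setminus \Sigma(\vt)$ and the geometry of $D_\delta$ from Lemma~\ref{l:Ahl} and the preceding Proposition), and that the complementary piece $f - Qf$, when restricted to $\overline\DD \setminus \Sigma(\vt)$, is controlled in $L^2(\mu)$ uniformly in $f$ and moreover defines a compact (in fact nuclear) map. A clean way to see the last point is that $(I-Q)$ maps the unit ball of $E^2(D_\delta)$ into a bounded set of functions holomorphic on a fixed neighborhood of $\overline\DD \setminus \Sigma(\vt)$, hence into a set that is relatively compact in the sup-norm on $\overline{\D}\setminus\Sigma(\vt)$ and has rapidly decaying Taylor/approximation numbers; since $\mu$ is finite and supported there, $I_\mu(I-Q)$ is approximable by finite-rank operators with exponentially small errors and so belongs to every Schatten class. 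The rest is soft functional analysis about ideals and restrictions.
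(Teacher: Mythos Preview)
Your decomposition $I_\mu = I_\mu Q + I_\mu(I-Q)$ with $Q f = f - \vt P_+(\vt^{-1}f)$ is exactly the paper's $P_1$, but the central claim that $I_\mu(I-Q)$ lies in every Schatten class is false, and the justification rests on a mis-identification. With your formula one has $(I-Q)f = \vt g$ where $g = P_+(\vt^{-1}f) \in E^2(D_\delta)$, \emph{not} $g \in E^2_-(D_\delta)\cap H^2_-(\D)$ as you write. Thus $I-Q$ simply maps $E^2(D_\delta)$ onto $\vt E^2(D_\delta)$: these are $\vt$ times arbitrary Hardy--Smirnov functions and carry no additional regularity on $\overline\D$; point evaluations still blow up as one approaches $\Sigma(\vt)\subset\partial D_\delta$, so the image of the unit ball is \emph{not} bounded in the sup norm on $\overline\D\setminus\Sigma(\vt)$ and your nuclearity argument collapses. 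Concretely, take $\mu$ to be arc length on $\T$. Then $I_\mu:E^2(D_\delta)\to L^2(\T)$ is bounded but not compact (already its restriction to $K_\vt$ is an isometry), $|\tilde\vt|=1$ a.e.\ on $\T$ so $M_{\tilde\vt}$ is an isometry, and $P_+M_{\vt^{-1}}$ has the bounded right inverse $M_\vt$ on $E^2(D_\delta)$; hence $I_\mu(I-Q)=M_{\tilde\vt}\,I_\mu\,(P_+M_{\vt^{-1}})$ is non-compact as well.

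What the factorization $I_\mu(I-Q)=M_{\tilde\vt}\,I_\mu\,(P_+M_{\vt^{-1}})$ \emph{does} give is a contraction of singular values: since $|\vt|\le 1$ on $\supp\mu\subset\overline\D$ while $|\vt|=\delta$ on $\partial D_\delta$, one has $\|M_{\tilde\vt}\|\le 1$ and $\|P_+M_{\vt^{-1}}\|\le C\delta^{-1}=:a$, whence $s_j\bigl(I_\mu(I-Q)\bigr)\le a\,s_j(I_\mu)$. Combined with your own decomposition and standard singular-number inequalities this bootstraps to $\sum_j s_j(I_\mu)^p \lesssim \sum_j s_j(I_\mu Q)^p$ \emph{provided} $a$ is small enough (less than $1$ for $p\ge 1$; less than some $a_0(p)$ for $p<1$). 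Since $C$ depends only on $D_\delta$ and may exceed $\delta$, one cannot assume $a<1$ outright; the paper's trick is to replace $\vt$ by a high power $\vt^n$, which leaves $D_\delta$, $P_\pm$, and $C$ unchanged, drives $a$ down to $C\delta^{-n}$, and is harmless because $K_{\vt^n}=K_\vt\oplus\vt K_\vt\oplus\cdots\oplus\vt^{n-1}K_\vt$ has the same $\mS_p$-embedding behavior as $K_\vt$. This contraction-plus-power argument is the missing idea.
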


\begin{proof} 
We focus on showing that $I_\mu: K_\vt\rightarrow L^2(\mu)$ belongs to $\mS_p$ if and only if $I_\mu : E^2(D_\delta)\rightarrow L^2(\mu)$ does. The statements concerning boundedness and compactness have very similar, but slightly easier proofs. We may further assume that $\dist (\Sigma(\vt), \supp \mu) > 0$, as one can easily see that all estimates are uniform with respect to this quantity.  

By Proposition \ref{l:kth},  $I_\mu:K_\vt\rightarrow L^2(\mu)$ belongs to $\mS_p$ if and only if $I_\mu:\K_{{\vt}} \rightarrow L^2(\mu)$ does. From Corollary \ref{cor:proj} follows the decomposition
\[
E^2(D_\delta)=\K_{{\vt}}\dotplus {\vt}E^2(D_\delta),
\]
with bounded projectors $P_1=M_{ {\vt}}P_-M_{ {\vt}^{-1}}$ and $Q_1= M_{ {\vt}}P_+M_{{\vt}^{-1}}$
 onto $\K_{{\vt}}$ and 
${\vt}E^2(D_\delta)$, respectively. Here $P_\pm$ are  defined in  \eqref{proj}, and    $M_h$ is the multiplication operator with symbol $h$.
We identify the functions in $E^2(D_\delta)$ with their boundary values and consider by extension $P_1$ and $Q_1$ as operators on $L^2(\partial \Dd)$. 
Let $C=\max(\|P_+\|, \|P_-\|)$. Since $ |{\vt}|=\delta$ a.e. on $\partial D_\delta$, we have $\|P_1\|, \|Q_1\|\le C$ and also 
$\|P_+M_{\vt^{-1}} \|\leq a:=C\delta^{-1}$. We estimate the singular value of the second summand in the decomposition
\begin{equation}
\label{dec}
I_\mu=I_\mu P_1+  I_\mu Q_1
\end{equation}

Introducing $\tilde{\vt}(z)={\vt}(z) \mathbf 1|_{\supp \mu}$, we have $I_\mu Q_1 = M_{\tilde {\vt}} I_\mu P_+M_{\vt^{-1}}$.  We obtain the following estimate of the $j$'th singular value of $I_\mu Q_1$:
\[
s_j(I_\mu Q_1)\leq \|M_{\tilde{\vt}} \| s_j(I_\mu) \| P_+M_{\vt^{-1}} \| \leq a s_j(I_\mu).
\]
Here we used the fact that $|\tilde{\vt}(z)|\leq 1$, $z\in \supp \mu$.  

Combining \eqref{dec}  with known (see e.g. \cite{GK}) inequalities for singular numbers we obtain 
 \[
\sum_{1}^n s_j(I_\mu) 
             \le \sum_{1}^n s_j(I_\mu P_1)+\sum_{1}^n s_j(I_\mu Q_1)
\le  \sum_{1}^n s_j(I_\mu P_1)+a  \sum_{1}^n s_j(I_\mu) .
\]

If $a<1$  and $p\ge 1$ this yields that $$\|I_\mu:E^2(\Dd) \to L^2(\mu)\|_{\mS_p} \sim \|I_\mu:  \K_\vt  \to L^2(\mu)\|_{\mS_p}.$$ More generally, such an equivalence of ideal norms holds for any {\em symmetrically normed ideal} of compact operators \cite{GK}. For $p<1$ note that $s_{2j-1}(I_\mu)\le s_j(I_\mu P_1)+as_j(I_\mu)$. If $a < a_0$ is small enough, where $2^{p+1}a_0^{p}=1$, we see that  
 $$\sum_j s_j(I_\mu P_1)^{p} \sim \sum_j s_j(I_\mu)^{p},$$
 finishing the proof also in this case.

To deal with general values of $a$,  we note that $I_\mu:K_\vt\rightarrow L^2(\mu)$ is in $\mS_p$ if and only if $I_\mu:K_{\vt^2}\rightarrow L^2(\mu)$ is in $\mS_p$, since $K_\vt\subset K_{\vt^2}=K_\vt\oplus\vt K_\vt$. Replacing $\vt$ by a sufficiently large power $\vt^n$ we will obtain a new value $a_1 = C\delta^{-n}$ such that
 $\|P_+ M_{\vt^{-n}}\| \leq a_1 < a_0 <1$; note that in moving from the study of $K_\vt$ to that of $K_{\vt^n}$ we do not change the domain $D_\delta$, so that the projections $P_\pm$ stay the same for all values of $n$. 
\end{proof}


\subsection{Whitney decomposition} 

To pass from the domain $\Dd$ to the unit disk, let $\sigma$ be a conformal mapping of $\D$ onto $\Dd$, and $\psi $ be its inverse. 
For $f\in E^2(D_\delta)$, let $h_f(w)=f(\sigma(w))(\sigma'(w))^{1/2}$, $w\in\DD$. 
Let further $\nu$ denote the measure on $\D$ given by
\[
\nu(E)=\int_{\sigma(E)}|\psi'|d\mu,\quad E\subset \DD.
  \]

Then for $f, g \in E^2(D_\delta)$ we have
\[
(I_\mu^*I_\mu f,g)=\int_{\overline{\D}\setminus \Sigma(\vt)} f(z)\overline{g}(z)d\mu(z)=\int_\DD h_f(w)\overline{h_g}(w)d\nu(w).
\]
 Therefore, by Theorem \ref{th:emb}, the embedding $I_\mu:K_\vt\rightarrow L^2(\mu)$ is in $\mS_p$ 
 if and only if the embedding $I_\nu:H^2(\DD)\rightarrow L^2(\nu)$ is in $\mS_p$.
We can now apply the results from Luecking \cite{L}  to
  conclude that $I_\mu:K_\vt\rightarrow L^2(\mu)$ is in the Schatten ideal $\mS_p$, $0 < p < \infty$, if and only if
\begin{equation}\label{eq:Schnu}
\sum_j\left(\frac{\nu(R_j)}{d(R_j)}\right)^{p/2}<\infty,
\end{equation}
where $\{R_j\}$ is the standard dyadic decomposition of the unit disk, and $d(R_j) = \diam(R_j)$.

For a domain $\Omega$ in the plane, we say that a family of Borel sets $\{G_i\}_i$ is a {\em Whitney-type decomposition} of $\Omega$ if $\Omega=\cup_i G_i$, the covering is of finite multiplicity, and there exist constants $a,b,c$ such that: (i) if $z_1, z_2\in G_i$ then $\dist(z_1,\partial\Omega)\le c\dist(z_2,\partial\Omega)$, (ii) for each $i$ there exists $z\in G_i$ such that $B(z, ad)\subset G_i\subset B(z,bd)$, where $d=\dist(z,\partial\Omega)$.

We need  the following simple observation:

{\em Given two Whitney-type decompositions $\{G_i\}_i$ and $\{F_j\}_j$   of a domain $\Omega$, let 
$J(i)=\{j, G_i\cap F_j\neq \emptyset\}$. Then $M:=\sup_i |J(i)|< \infty.$}

 Indeed, just note that all $F_j$ that intersect $G_i$ have diameter proportional to $d_i=\dist(G_i,\partial\Omega)$ and the area of each such $F_j$ is proportional to $d_i^2$.

  Furthermore, if $\phi : \Omega \to \mathbb{C}$ is univalent, then $\{\phi(G_i)\}_i$ is a Whitney-type decomposition of $\phi(\Omega)$. This follows from standard estimates for univalent functions, see e.g. \cite{Pbook}.  Together with Luecking's condition \eqref{eq:Schnu} this yields the following corollary.
     \begin{corollary} \label{cor:discretecond}
Let $\vt$ be a one-component inner function, $\Dd$ be a corresponding level set and $\mu$ be a positive measure on  $\overline{\DD} \setminus \Sigma(\vt)$. Further,
let $\{G_i\}$ be any Whitney-type decomposition of $D_\delta$.
Then the embedding $K_\vt\rightarrow L^2(\mu)$ belongs to the Schatten ideal $\mS_p$, $0 < p < \infty$, if and only if
\begin{equation}\label{eq:mu}
\sum_i\left(\frac{\mu(G_i)}{d(G_i)}\right)^{p/2}<\infty,
\end{equation}
where $d(G_i)=\diam(G_i)$.
\end{corollary}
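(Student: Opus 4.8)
The plan is to derive Corollary~\ref{cor:discretecond} from Luecking's condition \eqref{eq:Schnu} by transporting the dyadic decomposition $\{R_j\}$ of $\D$ through the conformal map $\sigma$ and then replacing it with the arbitrary Whitney-type decomposition $\{G_i\}$. First I would observe that $\{R_j\}$ is itself (comparable to) a Whitney-type decomposition of $\D$, so by the univalence remark just made, $\{\sigma(R_j)\}_j$ is a Whitney-type decomposition of $D_\delta$. The key analytic point is to show that the summand in \eqref{eq:Schnu} is comparable to $\left(\mu(\sigma(R_j))/d(\sigma(R_j))\right)^{p/2}$. For this I would unwind the definitions: $\nu(R_j) = \int_{\sigma(R_j)} |\psi'|\, d\mu$, and on $\sigma(R_j)$ the derivative $|\psi'| = |\sigma'(w_j)|^{-1}$ is essentially constant (Koebe distortion), comparable to $d(R_j)/d(\sigma(R_j))$ by the standard fact that $|\sigma'|$ at a point is comparable to the ratio of the distances to the respective boundaries. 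Hence $\nu(R_j) \simeq \bigl(d(R_j)/d(\sigma(R_j))\bigr)\mu(\sigma(R_j))$, and dividing by $d(R_j)$ gives $\nu(R_j)/d(R_j) \simeq \mu(\sigma(R_j))/d(\sigma(R_j))$, so \eqref{eq:Schnu} becomes $\sum_j \bigl(\mu(\sigma(R_j))/d(\sigma(R_j))\bigr)^{p/2} < \infty$.

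Next I would pass from the particular Whitney decomposition $\{\sigma(R_j)\}_j$ to an arbitrary one $\{G_i\}_i$. This is exactly the content of the ``simple observation'' quoted in the text: for two Whitney-type decompositions of the same domain, each $G_i$ meets at most $M$ of the $\sigma(R_j)$'s and vice versa, with $M$ uniform. Since for overlapping Whitney pieces the diameters are comparable and the masses $\mu(G_i)$, $\mu(\sigma(R_j))$ are controlled by the sums of the masses of the pieces they meet (using finite multiplicity of the coverings), a routine splitting/regrouping argument shows
\[
\sum_i\left(\frac{\mu(G_i)}{d(G_i)}\right)^{p/2} \simeq \sum_j\left(\frac{\mu(\sigma(R_j))}{d(\sigma(R_j))}\right)^{p/2},
\]
where the implied constants depend only on $p$, the Whitney constants, and $M$. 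For $p \geq 2$ one uses that $\ell^{p/2}$-quasinorms are subadditive-after-$p/2$-power in the obvious way; for $p < 2$ one uses the elementary inequality $(\sum a_k)^{p/2} \leq \sum a_k^{p/2}$ in one direction and Hölder (with the bounded multiplicity $M$) in the other. Combining this equivalence with the previous paragraph yields \eqref{eq:mu}.

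The main obstacle, though it is more bookkeeping than genuine difficulty, is making the comparison $|\psi'| \simeq d(R_j)/d(\sigma(R_j))$ on all of $\sigma(R_j)$ rigorous and uniform in $j$: one must invoke the Koebe distortion theorem to see that $|\sigma'|$ varies by a bounded factor over a Whitney box, and the standard two-sided estimate relating $|\sigma'(w)|$ to $\dist(\sigma(w),\partial D_\delta)/\dist(w,\partial\D)$ — both of which are ``standard estimates for univalent functions'' in the sense already cited via \cite{Pbook}, so they may be invoked directly. A secondary point is that Luecking's theorem and the reduction in the preceding paragraphs are stated for measures $\mu$ on $\overline{\D}\setminus\Sigma(\vt)$ and the decomposition is of $D_\delta$, which contains $\overline{\D}\setminus\Sigma(\vt)$ up to the boundary portion $\Sigma(\vt)$; since $\mu$ assigns no mass there this causes no issue, but it should be noted so that $\mu(G_i)$ is well-defined for every Whitney piece $G_i$ of $D_\delta$. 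With these points addressed the corollary follows immediately.
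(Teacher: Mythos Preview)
Your proposal is correct and follows essentially the same route as the paper: transport the dyadic boxes via $\sigma$, use the Koebe/Pommerenke estimate $|\sigma'|\sim d(\sigma(R_j))/d(R_j)$ to rewrite \eqref{eq:Schnu} as $\sum_j\bigl(\mu(\sigma(R_j))/d(\sigma(R_j))\bigr)^{p/2}<\infty$, and then pass to an arbitrary Whitney decomposition via the bounded-overlap observation. The only cosmetic difference is that the paper handles the last step for all $\alpha=p/2>0$ at once through the chain $\sum_i(\mu(G_i)/d(G_i))^\alpha\sim\sum_{i,j}\mu(G_i\cap Q_j)^\alpha/d(G_i)^\alpha\sim\sum_{i,j}\mu(G_i\cap Q_j)^\alpha/d(Q_j)^\alpha\sim\sum_j(\mu(Q_j)/d(Q_j))^\alpha$, rather than splitting into the cases $p\ge 2$ and $p<2$ as you do.
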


\begin{proof}
First let $Q_j=\sigma(R_j)$, where $\sigma:\DD\rightarrow D_\delta$ is a  conformal mapping, as above. Then $\{Q_j\}_j$ is a Whitney-type decomposition of $D_\delta$  and since $|\sigma'|\sim \dist(Q_j,\partial D_\delta)/\dist(R_j,\partial \DD)\sim \diam(Q_j)/\diam R_j$ (see \cite{Pbook}), (\ref{eq:Schnu}) is equivalent to
\[
\sum_i\left(\frac{\mu(Q_j)}{d(Q_j)}\right)^{p/2}<\infty.\]
But for any $\alpha>0$ we have 
\[
\sum_i\left(\frac{\mu(G_i)}{d(G_i)}\right)^{\alpha}\sim \sum_{i,j}\frac{\mu(G_i\cap Q_j)^{\alpha}}{d(G_i)^{\alpha}}\sim
\sum_{i,j}\frac{\mu(G_i\cap Q_j)^{\alpha}}{d(Q_j)^{\alpha}}\sim\sum_i\left(\frac{\mu(Q_j)}{d(Q_j)}\right)^{\alpha},\]
proving the corollary.
\end{proof}

We remark that Corollary \ref{cor:discretecond}, which will be the main ingredient in the proof of Theorem \ref{thm:integralcond} below, can be deduced from a result of Baranov \cite{B}. We think however that Theorem \ref{th:emb} may be of independent interest and it has some applications which do not appear to us to be immediate consequences of Corollary \ref{cor:discretecond}, see Section \ref{subsec:weighted}.

The proof of Corollary \ref{cor:discretecond} also gives a simple and natural criterion for the Schatten class memberships of embeddings of the Hardy-Smirnov space $E^2(D)$ into $L^2(\mu)$ for measures $\mu$ in $D$; one has to check the Luecking condition for an arbitrary Whitney-type decomposition of $D$.


\section{Composition operators}
 
\subsection{Preliminaries}
For a holomorphic function $\vf: \DD \to \DD$, we denote by
 $
 C_\vf: f \mapsto f\circ \vf
$ the composition operator acting on holomorphic functions $f$ in $\DD$.  This operator is bounded on the Hardy space 
 $H^2(\DD)$ (see e.g. \cite{S}). 
We study the operator    $C_\vf : K_\vt \to H^2(\DD)$, where $\vt$ is an inner function in $\DD$.
The compactness of $C_\vf$  in terms of the Nevanlinna counting function
\[
N_\vf(z)=\sum_{\vf(\zeta)=z} \log \frac 1 {|\zeta|}
\]
was characterized in \cite{LM}; $C_\vf:K_\vt\rightarrow H^2$ is compact if and only if 
\begin{equation}\label{eq:comp}
\limsup_{|z|\rightarrow 1}\frac{N_\vf(z)(1-|\vt(z)|^2)}{1-|z|^2}=0.
\end{equation}
 The basic tools in the argument are the Stanton formula
 \beq
 \label{eq:nvf}
 \|C_\vf f\|^2= 2 \int_\DD |f'(z)|^2 N_\vf(z) dA(z) + |f(\vf(0))|^2,
 \eeq
where $A$ is the normalized area measure, and  also the norm inequality due to Axler, Chang and Sarason \cite{ACS}
\begin{equation} \label{eq:axler}
\int_{\DD} |f'(z)|^2\frac{1-|z|^2}{(1-|\vt(z)|^2)^b}\le C\|f\|_2^2,\quad f\in K_\vt,\quad b\in(0,1/2).
\end{equation}

In this section we discuss when $C_\vf$ belongs to the Schatten ideals $\mS_p$ in the one-component case,  aiming  to capture the interaction between the symbol $\vf$ and the inner function $\vt$ that defines the model space. We recall the known description of the Schatten ideals for composition operators on the whole of $H^2$, due to Luecking and Zhu \cite{LZ}. The operator $C_\vf$ belongs to $\mS_p(H^2)$  if and only if
\begin{equation}\label{eq:Sp}
\int_{\DD}\left(\frac{N_\vf(z)}{1-|z|^2}\right)^{p/2}\frac{dA(z)}{(1-|z|^2)^2}<\infty.
\end{equation}   

It is well understood that the composition operators can be considered as a special case of the embedding operators, see e.g. \cite{CMc}.   We shall now clarify this connection in our context, so that we may apply Theorem \ref{th:emb} and Corollary \ref{cor:discretecond}. We associate $\vf : \D \to \D$ with its pullback measure $\mu_\vf$ on $\overline{\D}$; $$\mu_\vf(E) = s(\vf^{-1}(E) \cap \mathbb{T}), \quad E \subset \overline{\D},$$ where $s$ denotes the Lebesgue measure on $\T$. 

It is clear that $C_\vf : K_\vt \to H^2$ is unitarily equivalent to the embedding operator $I_{\mu_\vf} : K_\vt \to L^2(\mu_\vf)$, and similarly that $C_\vf : E^2(D_\delta) \to H^2$ is equivalent to $I_{\mu_\vf} : E^2(D_\delta) \to L^2(\mu_\vf)$. Before applying Theorem \ref{th:emb} we need to verify that $\mu_\vf(\Sigma(\vt)) = 0$. This is true in view of the following lemma and the fact that $\Sigma(\vt)$ has zero linear Lebesgue measure when $\vt$ is one-component, see \cite{A}. 
\begin{lemma}
$\mu_\vf|_{\T}$ is absolutely continuous.
\end{lemma}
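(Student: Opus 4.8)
The plan is to show that the boundary restriction $\mu_\vf|_{\T}$, which by definition is the pushforward under $\vf$ of the arc length measure $s$ on $\T$, carries no singular part with respect to $s$. The natural tool is the theory of boundary values of inner/analytic self-maps of the disk. First I would recall that $\vf:\D\to\D$ is holomorphic and bounded, so it has non-tangential boundary values $\vf^*(\xi)$ for a.e. $\xi\in\T$, and these boundary values satisfy $|\vf^*(\xi)|\le 1$. The measure $\mu_\vf|_\T$ is supported on $\overline{\D}$; the portion of $\T$ where $|\vf^*|<1$ contributes mass to the interior $\D$ (and is irrelevant to the claim of absolute continuity on $\T$), while the portion $E=\{\xi\in\T:|\vf^*(\xi)|=1\}$ maps into $\T$ and produces $\mu_\vf|_\T$.

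The key step is then the classical fact (going back to the F. and M. Riesz theorem and its refinements, see e.g. Cima–Matheson–Ross or Sarason's book on the theory of $H^p$ spaces) that the restriction of $\vf$ to $E$ pushes $s|_E$ forward to a measure on $\T$ that is absolutely continuous with respect to arc length. One clean way to see this: if $\vf$ is not itself inner, decompose using the inner-outer structure; the set $E$ has positive measure only when $\vf$ has a nontrivial inner factor behaving like a disk automorphism on a subset, and in any case one can reduce to the inner case. If $\vf$ is inner, then $\mu_\vf|_\T = s\circ(\vf^*)^{-1}$, and by the classical change-of-variables formula for inner functions — equivalently, the fact that $\int_\T g(\vf^*(\xi))\,ds(\xi) = \int_\T g\,ds + (\text{point mass correction at }\vf(0))$ for the Poisson-type identity — one sees the pushforward is absolutely continuous with a density controlled by the Poisson kernel and the boundary behavior. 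Concretely, for inner $\vf$ one has $\mu_\vf|_\T = ds$ (up to the harmless atom when $\vf(0)\ne 0$ is handled by composing with an automorphism), which is the strongest possible form of the conclusion. For general $\vf$ one factors $\vf = B \cdot O$ into inner $B$ and outer $O$, restricts attention to $E$ where $|O^*|=1$, and reduces to the inner case; the outer factor contributes nothing singular because $\log|O^*|\in L^1(\T)$ forces $|O^*|>0$ a.e. and the level set $|O^*|=1$ interacts with the inner part in a controlled way.

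The step I expect to be the main obstacle is making the reduction to the inner case rigorous while tracking that no singular mass appears on $\T$ — in particular, handling the interplay between the set $E=\{|\vf^*|=1\}$ and the inner-outer factorization, since a priori $\vf$ need not be inner and $E$ may be a proper subset of $\T$ of positive measure. The cleanest route is probably to avoid factorization altogether and argue directly: suppose toward a contradiction that $\mu_\vf|_\T$ has a nonzero singular part $\sigma$ concentrated on a set $S\subset\T$ with $s(S)=0$. Then $s\big(\{\xi\in\T:\vf^*(\xi)\in S\}\big)>0$, i.e. there is a set of positive arc-length measure on which the analytic function $\vf$ takes, non-tangentially, boundary values lying in a set of measure zero on $\T$. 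This contradicts the Privalov–Luzin uniqueness-type / Riesz-type theorems on the regularity of boundary values of analytic functions (an analytic function of bounded characteristic cannot have non-tangential limits confined to a null subset of $\T$ on a set of positive measure, unless it is constant — and a constant of modulus one is excluded or trivially handled). I would cite the relevant statement from a standard reference such as Garnett's \emph{Bounded Analytic Functions} or the monograph \cite{CMc}, rather than reprove it, and then conclude that $\sigma=0$, i.e. $\mu_\vf|_\T$ is absolutely continuous, completing the proof.
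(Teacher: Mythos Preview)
Your proposal does not close the argument. After some discussion of inner--outer factorization (which you correctly flag as awkward and never carry through), you settle on the following: assume $\mu_\vf|_\T$ has a singular part supported on $S\subset\T$ with $s(S)=0$; then $s\big(\{\xi\in\T:\vf^*(\xi)\in S\}\big)>0$, and you claim this ``contradicts the Privalov--Luzin uniqueness-type / Riesz-type theorems.'' But Privalov's theorem says that a nonconstant bounded analytic function cannot take a \emph{single} value as a non-tangential limit on a set of positive measure; it says nothing about limits lying in an arbitrary Lebesgue-null subset of $\T$. The extension from a point to a null set is exactly the content of the lemma you are trying to prove, so invoking it by name is circular. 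A correct reference here would be L{\"o}wner's lemma (which gives a Harnack-type bound $s\big((\vf^*)^{-1}(E)\cap\{|\vf^*|=1\}\big)\lesssim s(E)$), or the absolute continuity of the boundary part of the pullback measure as stated in the composition-operator literature, but you do not name either precisely, and neither is a ``Privalov--Luzin'' theorem.

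By contrast, the paper gives a short self-contained argument in the spirit of the original F.\ and M.\ Riesz proof: for a closed null set $E\subset\T$, take a peak function $G\in A(\D)$ with $G|_E=1$ and $|G|<1$ on $\overline{\D}\setminus E$; then $\int_{\overline{\D}} G^k\,d\mu_\vf=\int_\T G^k\circ\vf\,ds=G(\vf(0))^k\to 0$, while dominated convergence forces the left side to tend to $\mu_\vf(E)$. This avoids any appeal to factorization or to deeper boundary regularity theorems and proves the statement from scratch in a few lines.
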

\begin{proof}
It is sufficient to verify that $\mu_\vf(E) = 0$ for every closed measure zero set $E \subset \T$. We follow the approach of the original proof of the F. and M. Riesz theorem. Namely, there exists a continuous function $G : \overline{\D} \to \overline{\D}$, holomorphic in $\D$, such that $G(z) = 1$ for $z \in E$ and $|G(z)| < 1$ for $z \in \overline{\D} \setminus E$. Then $\lim_{k} \int_\D G^k d\mu_\vf = \mu_\vf(E)$. On the other hand, the sequence $G^k \circ \vf$ converges pointwise to zero in $\D$ and  is uniformly bounded.  Therefore,
\begin{equation*}
\mu_\vf(E) = \lim_{k} \int_\D G^k \, d\mu_\vf = \lim_k \int_\T G^k \circ \vf \,ds =\lim_k \left(G(\vf(0))\right)^k= 0.
\end{equation*}
\end{proof}

Theorem \ref{th:emb} now yields 
\begin{corollary} \label{cor:compdiscrete}
Let $\vt$ be a one-component inner function and $\vf : \D \to \D$ be a holomorphic function. Given any  Whitney-type decomposition $\{G_j\}$ of $D_\delta$,  the operator $C_\vf : K_\vt \to H^2$ belongs to $\mS_p$, $0 < p < \infty$, if and only if 
\begin{equation} \label{eq:discretecond2}
\sum_i \left( \frac{\mu_\vf(G_i)}{d(G_i)} \right)^{p/2} < \infty.
\end{equation}
\end{corollary}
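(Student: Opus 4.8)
The plan is to obtain this as a direct specialization of Corollary \ref{cor:discretecond} (itself a consequence of Theorem \ref{th:emb}) to the pullback measure $\mu_\vf$. All the analytic content has already been assembled above; what remains is to check the hypotheses and read off the conclusion.

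First I would recall the unitary equivalence noted just before the statement: $C_\vf : K_\vt \to H^2$ is unitarily equivalent to the embedding $I_{\mu_\vf} : K_\vt \to L^2(\mu_\vf)$. Hence $C_\vf \in \mS_p$ if and only if $I_{\mu_\vf} \in \mS_p$, and it suffices to apply the embedding criterion to the measure $\mu = \mu_\vf$.

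Next I would verify that $\mu_\vf$ meets the hypotheses of Corollary \ref{cor:discretecond}, namely that it is a finite positive measure carrying no mass on $\Sigma(\vt)$. Positivity and finiteness are immediate from $\mu_\vf(E) = s(\vf^{-1}(E) \cap \T)$, whose total mass is $s(\T)$. The condition $\mu_\vf(\Sigma(\vt)) = 0$ is exactly what was argued before the statement: the preceding Lemma gives that $\mu_\vf|_\T$ is absolutely continuous, while $\Sigma(\vt) \subset \T$ has zero linear Lebesgue measure in the one-component case by \cite{A}. Since $\Sigma(\vt) \subset \T$ and $\Dd \supset \overline{\D} \setminus \Sigma(\vt)$, the support of $\mu_\vf$ lies in $\Dd$ up to a $\mu_\vf$-null set, so we may regard $\mu_\vf$ as a positive measure on $\overline{\D} \setminus \Sigma(\vt)$ and any Whitney-type decomposition $\{G_i\}$ of $\Dd$ covers it.

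Finally, applying Corollary \ref{cor:discretecond} to $\mu_\vf$ yields that $I_{\mu_\vf} \in \mS_p$ if and only if $\sum_i (\mu_\vf(G_i)/d(G_i))^{p/2} < \infty$, which is precisely \eqref{eq:discretecond2}; combined with the unitary equivalence this gives the claim. I do not expect any real obstacle here, as the substance resides in Theorem \ref{th:emb} and Corollary \ref{cor:discretecond}; the only point requiring a word of care is that $\mu_\vf$ places no mass on the spectrum $\Sigma(\vt)$, which the absolute-continuity Lemma settles.
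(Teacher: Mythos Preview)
Your proposal is correct and follows essentially the same route as the paper: the paper simply notes the unitary equivalence of $C_\vf$ with $I_{\mu_\vf}$, uses the absolute continuity lemma together with the fact that $\Sigma(\vt)$ has zero linear measure to check $\mu_\vf(\Sigma(\vt))=0$, and then states that the corollary follows (the paper writes ``Theorem~\ref{th:emb} now yields'', but the Whitney condition comes via Corollary~\ref{cor:discretecond}, exactly as you invoke it). You have merely made explicit the verification of hypotheses that the paper leaves implicit.
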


\subsection{Nevanlinna counting function}
In this subsection we implement the approach of \cite{LQLR, LQLR2} in our more general setting, with the goal of showing that \eqref{eq:discretecond2}  is equivalent to 
 \[
\int_{D_\delta}\left(\frac{N_\vf(z)}{\dist(z, \partial D_\delta)}\right)^{p/2}\frac{dA(z)}{\dist(z,\partial D_\delta)^2}<\infty.
\]
Theorem \ref{thm:integralcond} then immediately follows from the relation
\begin{equation}\label{eq:aleksandrov}
\dist(z,\partial D_\delta) \sim \frac{1-|z|^2}{1-|\vt(z)|^2}, \quad z \in \overline{\D} \setminus \Sigma(\vt),
\end{equation} 
 which holds for one-component inner functions, by Theorems 1.1 and 1.2 in Aleksandrov \cite{A}.

We begin by fixing a convenient Whitney-type decomposition of $D_\delta$. Clearly we are interested only in domains that intersect $\overline{\D}$.
We construct a decomposition of $ A = \{1/2<|z|\leq 1\} \setminus \Sigma(\vt)$ as follows. First we divide $A$ into four equal parts, one for each quadrant. Each part is roughly a Carleson square. Fix some $\gamma>0$. We  say that a Carleson square $S$ is {\em good} if $\dist(S, \partial D_\delta)>\gamma d(S)$.  If a square is good we include  it in our family of sets $\{G_i\}$. Otherwise we include  its upper half  into the family $\{G_i\}$
and divide the lower half into two new Carleson squares. We repeat the procedure inductively, obtaining a countable family of sets $\{G_i\}$ that covers $A$. In particular, every point $z \in \T \setminus \Sigma(\vt)$ is included in a good square, since $\Sigma(\vt)$ is a closed set. We claim that $\dist(G_i, \partial D_\delta) \simeq d(G_i)$ for each $G_i$. For if $G_i$ is the upper half of a bad square $S$, it automatically satisfies $d(G_i) \lesssim \dist(G_i, \partial D_\delta)$. Since $S$ was bad we obtain the reverse inequality, 
    $$
    \dist(G_i, \partial D_\delta) \lesssim d(G_i) + \dist(S, \partial D_\delta) \leq  d(G_i) + \gamma d(S) \lesssim d(G_i).
    $$ 
A similar argument works for the good squares $G_i$.  Further, $\{G_i\}$ can be extended to a Whitney-type decomposition of the whole $D_\delta$,  since $\supp\mu\subset \overline{\D}$.  We omit the corresponding  terms.

For each $G_i$ we let $W_i$  be the corresponding Carleson square (i.e. either $W_i=G_i$ or $G_i$ is the upper half of $W_i$). Given  a Carleson square $W$ supported by the arc $I \subset \T$  and $a>0$ we denote by $aW$ the Carleson square supported by the arc $aI\subset \T$. $aI$ has the same center as $I$ and $|aI|=a|I|$.     
According to \cite{LQLR} and \cite{LQLR2}  there exists $a>1$ such that for some   constant $c > 0$
the  following estimates hold,
\begin{equation}
 \label{eq:munev1}
\mu(W_i)^{p/2}\le \frac{c}{A(W_i)}\int_{\tilde{W}_i}N_\vf^{p/2}dA,
\end{equation}
\begin{equation} 
\label{eq:munev2}
\sup_{W_i}N_\vf\le c\mu(\tilde{W}_i),
\end{equation}
 where $\tilde{W_i}=a {W_i}$.

To obtain the integral condition, we follow the argument of \cite[Proposition 3.3]{LQLR0} and prove that
 (\ref{eq:discretecond2}) for the above decomposition $\{G_i\}_i$ is equivalent to
\begin{equation}\label{eq:mu1}
\sum_i\left(\frac{\mu_\vf(\tilde{W}_i)}{d(W_i)}\right)^{p/2}<\infty.
\end{equation}

Clearly (\ref{eq:mu1}) implies (\ref{eq:discretecond2}). Now we prove the converse statement.

For each $j$  and $n$ let  $I(j)=\{i \, : \, \tilde{W}_i\cap G_j\neq\emptyset \}$ and 
\[
S_{n,j} = \{i \in I(j) \, : \, 2^nd(G_j) \leq d(W_i) < 2^{n+1}d(G_j) \}.
\]
 We note that $d(G_j) \sim \dist(G_j,\partial D_\delta)\lesssim d(W_i)$ for any $i\in I(j)$, yielding that 
 $S_{n,j} = \emptyset$ when $n<n_0$ for some fixed (negative) $n_0$.  We also have 
 $\dist(G_j, W_i)\lesssim d(W_i)$ which yields  $C:=\sup_{j,n} |S_{n,j}| < \infty$. 
 
 For $\alpha:=p/2\le1$ we now obtain
\begin{multline*}
\sum_i\left(\frac{\mu_\vf(\tilde{W}_i)}{d(W_i)}\right)^{\alpha}\le \sum_i\sum_{j: i\in I(j)}\frac{\mu_\vf(G_j)^\alpha}{d(W_i)^\alpha}=\\
\sum_j\mu_\vf(G_j)^\alpha\sum_{i\in I(j)} d(W_i)^{-\alpha}\lesssim \sum_j\left(\frac{\mu_\vf(G_j)}{d(G_j)}\right)^\alpha,
\end{multline*}
as desired. 

For $\alpha>1$ we also follow the argument of \cite{LQLR0}: for each index $i$, consider the sets
\begin{equation*}
s_{n,i} = \{j \, : \, i \in I(j), \; 2^{-n-1}d(W_i) \leq d(G_j) < 2^{-n}d(W_i) \}.
\end{equation*}
We have  $|s_{n,i}| \leq C  2^n$  for some constant $C$. Let $\beta$ be the conjugate exponent of $\alpha$ and choose   $\gamma\in(1-1/\alpha, 1)$ such  that $1 - \gamma \beta < 0$. By the H{\"o}lder inequality
\begin{multline*}
\mu_\vf (\tilde{W}_i)\le \left(\sum_{j: i\in I(j)} d(G_j)^{\gamma\beta}\right)^{1/\beta}\left(\sum_{j: i\in I(j)}d(G_j)^{-\gamma\alpha}\mu_\vf(G_j)^\alpha\right)^{1/\alpha}\lesssim\\
 d(W_i)^\gamma\left(\sum_{j: i\in I(j)}d(G_j)^{-\gamma\alpha}\mu_\vf(G_j)^\alpha\right)^{1/\alpha},
 \end{multline*}
so that finally,
\[
\sum_i\left(\frac{\mu_\vf(\tilde{W}_i)}{d(W_i)}\right)^{\alpha}\le\sum_j \mu_\vf(G_j)^\alpha d(G_j)^{-\gamma\alpha}\sum_{i\in I(j)} d(W_i)^{\gamma\alpha-\alpha}\lesssim \sum_j\left(\frac{\mu_\vf(G_j)}{d(G_j)}\right)^\alpha.
\]

The sought after criterion in terms of the Nevanlinna counting function now follows readily.

\begin{theorem} \label{thm:integralcond}
Let $\vt$ be a one-component inner function. The operator $C_\vf:K_\vt\rightarrow H^2$ is in $\mS_p$, $0 < p < \infty$, if and only if 
\[
\int_\DD\left(\frac{N_\vf(z)(1-|\vt(z)|)^2}{1-|z|^2}\right)^{p/2}\left(\frac{1-|\vt(z)|^2}{1-|z|^2}\right)^2dA<\infty.\]
\end{theorem}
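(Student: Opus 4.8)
The plan is to combine the discrete criterion of Corollary~\ref{cor:compdiscrete}, specialized to the explicit decomposition $\{G_i\}$ built above, with the Nevanlinna estimates \eqref{eq:munev1}--\eqref{eq:munev2} and the comparison \eqref{eq:aleksandrov}. By what has already been shown, $C_\vf\in\mS_p$ if and only if \eqref{eq:discretecond2} holds, and \eqref{eq:discretecond2} is equivalent to \eqref{eq:mu1}; so it suffices to prove that, writing $\alpha=p/2$ and $D=D_\delta$,
\[
\sum_i\Big(\frac{\mu_\vf(G_i)}{d(G_i)}\Big)^{\alpha}\ \lesssim\ \int_{D}\Big(\frac{N_\vf(z)}{\dist(z,\partial D)}\Big)^{\alpha}\frac{dA(z)}{\dist(z,\partial D)^{2}}\ \lesssim\ \sum_i\Big(\frac{\mu_\vf(\tilde W_i)}{d(W_i)}\Big)^{\alpha}+C,
\]
with $C=C(\vf,\vt)<\infty$ absorbing the contribution of the fixed compact piece $\{|z|\le 1/2\}$, on which $N_\vf$ is bounded and $\dist(\cdot,\partial D)$ is bounded below; the integrand vanishes outside $\overline\D$ because $N_\vf\equiv 0$ there. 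Granting this two-sided bound, the theorem follows by inserting $\dist(z,\partial D)\simeq (1-|z|^2)/(1-|\vt(z)|^2)$ from \eqref{eq:aleksandrov} (valid off $\Sigma(\vt)$, which is area-null) and using $1-|\vt(z)|^2\simeq 1-|\vt(z)|$ on $\overline\D$.

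For the left-hand inequality I would bound $\mu_\vf(G_i)\le\mu_\vf(W_i)$, apply \eqref{eq:munev1}, and note that $A(W_i)\simeq d(W_i)^2\simeq d(G_i)^2$ and that $\dist(z,\partial D)\lesssim d(W_i)$ for every $z\in\tilde W_i$ -- the latter because $\tilde W_i$ lies within distance $\lesssim d(W_i)$ of $G_i$ and $\dist(G_i,\partial D)\simeq d(G_i)$ by the comparison established above. Each term is thereby dominated by a constant times $\int_{\tilde W_i}(N_\vf/\dist)^{\alpha}\,dA/\dist^2$, and summing against the bounded multiplicity of $\{\tilde W_i\}$ produces the integral. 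For the right-hand inequality I would cover $\{1/2<|z|<1\}$ by the $G_i$ (recall $\Sigma(\vt)\subset\T$), use $\dist(z,\partial D)\simeq d(G_i)$ on $G_i$, estimate $\int_{G_i}N_\vf^{\alpha}\,dA\le(\sup_{G_i}N_\vf)^{\alpha}A(G_i)$ and then $\sup_{G_i}N_\vf\le\sup_{W_i}N_\vf\le c\,\mu_\vf(\tilde W_i)$ via \eqref{eq:munev2}, and sum, again using $A(G_i)\lesssim d(G_i)^2$ and the bounded multiplicity of $\{G_i\}$. Both summations are of the same elementary kind as in \cite[Proposition 3.3]{LQLR0}.

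The only points that I expect to need genuine care, rather than being purely formal, are the geometric inputs about the special decomposition inside $D_\delta$: that $\dist(\cdot,\partial D)\simeq d(G_i)$ on each $G_i$ and $\dist(\cdot,\partial D)\lesssim d(W_i)$ on each $\tilde W_i$ (which use the good/bad dichotomy of the construction together with $\partial D_\delta\subset\{|w|\ge 1\}$), and that the dilated Carleson squares $\{\tilde W_i\}$ still form a family of bounded multiplicity, which hinges on the subdivision terminating at good squares so that a fixed point can belong to only boundedly many $W_i$. Everything else is routine; in a sense the real work is already behind us, since the passage from $K_\vt$ to an embedding of $E^2(D_\delta)$ -- and thence, through \cite{LQLR,LQLR2}, to the Nevanlinna counting function -- is exactly what makes a counting-function criterion available at all.
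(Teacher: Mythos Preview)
Your overall strategy matches the paper's, and your right-hand inequality (the integral bounded by the discrete sum via \eqref{eq:munev2}) is correct and essentially identical to the paper's argument. The gap is in your left-hand inequality, specifically in the claim that $\{\tilde W_i\}$ has bounded multiplicity. It does not. In the stopping-time construction, if $z$ lies near $\Sigma(\vt)$ then $z$ belongs to every bad square $W_i$ along the chain from the initial square down to the $G_j$ containing $z$; that chain has length comparable to $\log\bigl(1/\dist(z,\partial D_\delta)\bigr)$, which is unbounded as $z\to\Sigma(\vt)$. So ``the subdivision terminating at good squares'' guarantees that the number of $W_i$ containing a \emph{fixed} $z$ is finite, but not that it is uniformly bounded. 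Consequently your sum $\sum_i\int_{\tilde W_i}(N_\vf/\dist)^{\alpha}\,dA/\dist^2$ does not collapse to a single integral; at best you pick up a logarithmic factor in $\dist(z,\partial D_\delta)$.

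The paper avoids this by \emph{not} replacing $d(G_i)^{-\alpha-2}$ with $\dist(z,\partial D)^{-\alpha-2}$ inside the integral (your replacement is wasteful on the parts of $\tilde W_i$ where $\dist(\cdot,\partial D)\ll d(W_i)$). Instead one keeps $d(G_i)^{-\alpha-2}$ outside, writes $\int_{\tilde W_i}\le\sum_{j:\,i\in I(j)}\int_{G_j}$, swaps the order of summation, and invokes the geometric decay
\[
\sum_{i\in I(j)} d(G_i)^{-\alpha-2}\ \lesssim\ d(G_j)^{-\alpha-2},
\]
which follows from the $S_{n,j}$ analysis already carried out before the theorem (namely $d(W_i)\gtrsim d(G_j)$ for $i\in I(j)$, and $|S_{n,j}|\le C$). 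Then $d(G_j)^{-\alpha-2}\int_{G_j}N_\vf^\alpha\,dA$ is compared to the integral using $\dist\simeq d(G_j)$ on $G_j$ and the essential disjointness of the $G_j$'s. So the fix is to use the geometric-series structure of $\{W_i:i\in I(j)\}$ rather than a (false) bounded-overlap property.
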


\begin{proof}
We follow the proof for the Hardy space given in \cite[Theorem 6.1]{LQLR2}. Inequality \eqref{eq:munev2} for the Nevanlinna counting function implies
\begin{multline*}
\int_{\DD}\left(\frac{N_\vf(z)}{\dist(z, \partial D_\delta)}\right)^{p/2}\frac{dA(z)}{\dist(z,\partial D_\delta)^2}\lesssim \\ \sum_i
d(G_i)^{-2-p/2}\int_{G_i}N_\vf(z)^{p/2}dA(z)\lesssim\sum_i d(G_i)^{-p/2}\mu_\vf(\tilde{W}_i)^{p/2}.\end{multline*}
The converse follows from  inequality \eqref{eq:munev1}. Specifically,
\begin{align*}
\sum_i d(G_i)^{-p/2}\mu_\vf(G_i)^{p/2} &\le \sum_i d(G_i)^{-p/2-2}\int_{\tilde{W}_i}N_\vf(z)^{p/2}dA(z) \\ &\lesssim
\sum_j\sum_{i\in I(j)}d(G_i)^{-p/2-2}\int_{G_j}N_\vf(z)^{p/2}dA(z)\\ &\lesssim \int_{\DD}\left(\frac{N_\vf(z)}{\dist(z, \partial D_\delta)}\right)^{p/2}\frac{dA(z)}{\dist(z,\partial D_\delta)^2},\end{align*}
where the last inequality follows as in the discussion preceding the statement of the theorem.
\end{proof}


\subsection{Weighted composition operators on $H^2$} \label{subsec:weighted}

We complete the study of composition operators on model spaces generated by one-component inner functions, by establishing the connection given by Theorem \ref{th:emb} between composition operators on $K_\vt$ and weighted composition operators on $H^2(\D)$. As previously noted, $C_\vf : K_\vt \to H^2$ is of $\mathcal{S}_p$-class if and only if $C_\vf : E^2(\Dd) \to H^2$ has the same property. That is, if and only if the weighted composition operator $C : H^2 \to H^2$ is of $\mathcal{S}_p$-class, where
\[C h=(\psi'\circ \vf )^{1/2}h\circ\psi\circ\vf, \quad h \in H^2.\]
We mention \cite{M, GGN}, where composition operators on Hardy-Smirnov spaces $E^2(D)$ have been studied as weighted composition operators on $H^2(\DD)$. Note that $ C h = h(M_{\psi\circ\vf}) (\psi'\circ \vf )^{1/2}$, at least for polynomials $h$. Here $M_{\psi\circ\vf}$ denotes a multiplication operator on $H^2$, and we hence understand $Ch$ as the action of the operator $h(M_{\psi\circ\vf})$ on $(\psi'\circ \vf )^{1/2} \in H^2$. For $p \geq 1$, Harper and Smith \cite{HS} have utilized the theory of contractive semigroups to characterize the Schatten membership of such operators in terms of Berezin transform-type conditions. Note that in their notation, $C = \Lambda_{M_{\psi\circ\vf}, (\psi'\circ \vf )^{1/2}}$.
\begin{theorem} \label{thm:berezin}
Denote by $G_z(w) = \frac{(1-|z|^2)^{1/2}}{1-\bar{z}w}$ the normalized reproducing kernel of $H^2$ at $z$ and by $H_z(w) = \frac{(1-|z|^2)^{3/2}w}{(1-\bar{z}w)^2}$ the normalized derivative of the reproducing kernel at $z$. Let $\vt$ be a one-component inner function and let $C$ be defined as above.
\begin{enumerate}
\item If $1 \leq p \leq 2$, then $C_\vf : K_\vt \to H^2$ belongs to $\mathcal{S}_p$ if and only if 
\begin{equation*}
\int_\D \| C H_z \|^p_{H^2} \frac{dA(z)}{(1-|z|^2)^2} < \infty.
\end{equation*}

\item If $2 < p < \infty$, then $C_\vf : K_\vt \to H^2$ belongs to $\mathcal{S}_p$ if and only if 
\begin{equation*}
\int_\D \| C G_z \|^p_{H^2} \frac{dA(z)}{(1-|z|^2)^2} < \infty.
\end{equation*}
\end{enumerate}
\end{theorem}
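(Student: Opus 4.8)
The plan is to recognize $C_\vf\colon K_\vt\to H^2$, through the two reductions already in place, as a weighted composition operator of exactly the kind treated by Harper and Smith in \cite{HS}, and then to invoke their theorem.

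First, as established in the discussion preceding the statement, Theorem~\ref{th:emb} applied to the pullback measure $\mu_\vf$ --- which satisfies $\mu_\vf(\Sigma(\vt))=0$ --- shows that $C_\vf\colon K_\vt\to H^2$ belongs to $\mS_p$ if and only if $C_\vf\colon E^2(D_\delta)\to H^2$ does, and conjugating the latter by the unitary $E^2(D_\delta)\to H^2$, $f\mapsto (f\circ\sigma)(\sigma')^{1/2}$, produces the operator $C$ on $H^2$. Writing $b=\psi\circ\vf$ and $g=(\psi'\circ\vf)^{1/2}$, one checks that $b$ and $g$ are holomorphic on $\D$ (since $\vf(\D)\subset\D\subset D_\delta$, where $\psi$ is analytic), that $b(\D)\subset\D$ so that $M_b$ is a contraction on $H^2$, and that $g\in H^2$ (for the ``only if'' direction one may assume $C_\vf$, hence $C$, is bounded, whence $g=C1\in H^2$; for the converse, finiteness of the displayed integral will itself force boundedness through the cited result). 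It is also worth noting that $b$ is not inner: since $\mu_\vf(\Sigma(\vt))=0$ and $\partial D_\delta\cap\overline{\D}=\Sigma(\vt)$, the boundary values of $\vf$ avoid $\partial D_\delta$ a.e.\ on $\T$, so $|b|<1$ a.e.; this is the completely non-unitary situation in which the Harper--Smith machinery operates.

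With $Ch=h(M_b)g$ for polynomials $h$ (extended via the $H^\infty$ functional calculus of the contraction $M_b$), the operator $C$ is precisely $\Lambda_{M_b,g}$ in the sense of \cite{HS}. Their characterization of $\mS_p$-membership for $1\le p<\infty$ in terms of Berezin-transform quantities then specializes to assertions (1) and (2), once one identifies the test vectors occurring there with the normalized reproducing kernel $G_z$ and its normalized derivative $H_z$, and the integrating measure with the invariant measure $(1-|z|^2)^{-2}\,dA$. The break at $p=2$ is the expected one: for $1\le p\le 2$ the more concentrated family $\{H_z\}$ is needed --- and at $p=2$ the resulting integral equals $\|C\|_{\mS_2}^2$, by the Littlewood--Paley identity $\int_\D|h'(z)|^2(1-|z|^2)\,dA\simeq\|h\|_{H^2}^2$ together with $\langle H_z,h\rangle=(1-|z|^2)^{3/2}\overline{h'(z)}$ --- whereas for $p>2$ the kernels $\{G_z\}$ already detect membership. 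The only point requiring genuine verification, as opposed to bookkeeping, is that $C$ meets the precise hypotheses imposed in \cite{HS} and that the Berezin-type expressions there coincide, up to equivalence of norms, with $\|CH_z\|_{H^2}$ and $\|CG_z\|_{H^2}$; the latter is a short computation from the formula $Ch=(h\circ b)\,g$.
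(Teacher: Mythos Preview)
Your proposal is correct and follows exactly the route the paper takes: the paper does not give a separate proof of this theorem but sets it up in the paragraph preceding the statement, observing that $C_\vf:K_\vt\to H^2$ is $\mS_p$ iff $C_\vf:E^2(D_\delta)\to H^2$ is (Theorem~\ref{th:emb}), that the latter is unitarily equivalent to the weighted composition operator $C$, and that $C=\Lambda_{M_{\psi\circ\vf},(\psi'\circ\vf)^{1/2}}$ in the Harper--Smith notation, so their Berezin-type criterion applies. Your write-up supplies a few of the verifications (e.g.\ that $b=\psi\circ\vf$ is completely non-unitary because $\mu_\vf(\Sigma(\vt))=0$) that the paper leaves implicit, but the argument is the same.
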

\begin{remark}
For $\vt = 0$, $p \geq 2$, we obtain the characterization of the Schatten classes in terms of the Berezin transforms found in \cite{Z}. For $p = 2$, note that 
\begin{align*}
\int_\D \| C H_z \|^2_{H^2} \frac{dA(z)}{(1-|z|^2)^2} &= \int_{\overline{\D}} |\psi'(w)| |\psi(w)|^2 \int_\D \frac{1-|z|^2}{|1 - \bar{z}\psi(w)|^4} dA(z) \, d\mu_\vf (w) \\ &\sim \int_{\overline{\D}} \frac{|\psi'(w)| |\psi(w)|^2}{1-|\psi(w)|^2} d\mu_\vf (w).
\end{align*}
Note that $1 - |\psi(w)|^2 \sim \dist(w, \partial D_\delta) |\psi'(w)|$ by standard estimates \cite{Pbook}, and that we may reexpress \eqref{eq:aleksandrov} as
\begin{equation*}
\dist(w,\partial D_\delta) \sim \frac{1-|w|^2}{1-|\vt(w)|^2} = \|k_w\|^{-2}, \quad w \in \overline{\D} \setminus \Sigma(\vt),
\end{equation*}
 where $k_w$ is the reproducing kernel of $K_\vt$ at $w$. In summary, 
\begin{equation*}
\int_\D \| C H_z \|^2_{H^2} \frac{dA(z)}{(1-|z|^2)^2} \sim \int_{\overline{\D}} \|k_w\|^{2} \, d\mu_\vf(w).
\end{equation*}
Theorem \ref{thm:berezin} could hence be viewed as a generalization of the simple fact that $C_\vf : K_\vt \to H^2$ is Hilbert-Schmidt if and only if $ \int_{\overline{\D}} \|k_w\|^{2} \, d\mu_\vf < \infty$.
\end{remark}

\subsection{Examples on the Paley--Wiener space} 
In this section  we consider the special case $\vt(z) = \exp \left ({-\frac{1+z}{1-z}}\right ) $. The space  $K_\vt$ can then be 
naturally identified    with the classical Paley--Wiener space  of entire functions.

For $0 < \alpha < 1$ we construct domains $U_\alpha \subset \DD$ satisfying the following proposition. The domain $U_\alpha$ will be chosen so that the boundary $\partial U_\alpha$ intersects $\TT$ only at $1$ and has a corner of angle $\pi \alpha$ there. 
\begin{proposition} \label{prop:pwex}
 For each $\alpha$, $0 < \alpha < 1$, there exists a domain $U_\alpha$ such that any corresponding Riemann map $\vf_\alpha : \DD \to U_\alpha$ satisfies
\begin{enumerate}
\item $C_{\vf_\alpha} : H^2 \to H^2$ is compact but not in any Schatten class $\mS_p$,
\item $C_{\vf_\alpha} : K_\vt \to H^2$ is in $\mS_p$ for $p > 2\alpha/(\alpha-1)$.
\end{enumerate}
\end{proposition}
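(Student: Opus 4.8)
The plan is to construct $U_\alpha$ explicitly as a region that, near the point $1\in\T$, coincides with a circular sector of opening angle $\pi\alpha$ with vertex at $1$, and which otherwise stays safely inside $\D$ and touches $\T$ nowhere else. The two Schatten conditions will then be read off from the two criteria available to us: for part (1), the Luecking--Zhu condition \eqref{eq:Sp} in terms of $N_{\vf_\alpha}$ on $\D$; for part (2), the criterion of Theorem \ref{thm:integralcond}, equivalently (and more conveniently here) the discrete condition of Corollary \ref{cor:compdiscrete} applied to a Whitney decomposition of $D_\delta$. Since everything is governed by the local behavior at $1$, the whole computation reduces to understanding how much harmonic measure / pullback mass the map $\vf_\alpha$ deposits in Carleson boxes shrinking toward $1$, and how that compares with the distance to $\partial D_\delta$, which near $1$ behaves like the distance to $\partial\D$ raised to the power $1$ (since $\vt(z)=\exp(-\frac{1+z}{1-z})$ gives $1-|\vt(z)|^2\sim \re\frac{1+z}{1-z}$, so $\dist(z,\partial D_\delta)\sim (1-|z|^2)/(1-|\vt(z)|^2)\sim |1-z|^2/(1-|z|)\sim|1-z|$ along the relevant approach region).

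Concretely, I would first record, via the Riemann map of a sector, that $\vf_\alpha^{-1}$ near $1$ behaves like $w\mapsto 1-c(1-w)^{1/\alpha}$ (composing the sector-straightening power map with a Möbius normalization), so that a point at distance $t$ from $1$ inside $U_\alpha$ is the image of a point at distance $\sim t^{\alpha}$ from $1$ in $\D$. Next I would compute the pullback measure $\mu_{\vf_\alpha}$ of Carleson boxes: for the dyadic box $S_n$ at $1$ of side $2^{-n}$ on $\T$ (target side), $\mu_{\vf_\alpha}(S_n)=s(\vf_\alpha^{-1}(S_n)\cap\T)$ is comparable to the length on $\T$ of the preimage arc, which by the $t\mapsto t^{\alpha}$ scaling is $\sim 2^{-n\alpha}$. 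Plugging into Corollary \ref{cor:compdiscrete}: the Whitney pieces $G_i$ of $D_\delta$ sitting near $1$ at ``scale'' $2^{-n}$ have $d(G_i)\sim\dist(G_i,\partial D_\delta)\sim 2^{-n}$, there are $\sim 2^n/2^{-n}\cdot$(a bounded number per dyadic generation — in fact $O(1)$ per generation after the good/bad square construction) of them carrying the mass near $1$, and $\mu_{\vf_\alpha}(G_i)\lesssim 2^{-n\alpha}$; hence $\sum_i(\mu_{\vf_\alpha}(G_i)/d(G_i))^{p/2}\sim\sum_n (2^{-n\alpha}/2^{-n})^{p/2}=\sum_n 2^{-np(1-\alpha)/2}\cdot 2^{?}$. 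One must be careful about how many boxes per generation and whether mass can also land away from $1$; since $\partial U_\alpha$ meets $\T$ only at $1$, away from $1$ the operator $C_{\vf_\alpha}$ behaves boundedly and contributes a convergent tail, so convergence is dictated purely by the series at $1$, which converges precisely when $p(1-\alpha)/2 > \gamma$ for the appropriate exponent $\gamma$ coming from counting boxes; matching this against the claimed threshold $p>2\alpha/(\alpha-1)$ — note $2\alpha/(\alpha-1)<0$, so for $0<\alpha<1$ this is automatically satisfied for all $p>0$, meaning part (2) should really just assert membership in $\mathcal S_p$ for all $p$ in that (vacuous-looking) range, i.e. the point is that the threshold is negative, so $C_{\vf_\alpha}:K_\vt\to H^2$ lies in every Schatten class — I would double-check the intended reading of the inequality, but the mechanism is that the extra factor $(1-|\vt|^2)^2/(1-|z|^2)^2\sim |1-z|^{-2}\cdot(1-|z|)^2$ in Theorem \ref{thm:integralcond} improves convergence enough to beat the sector's slow decay. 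For part (1), the same scaling shows $N_{\vf_\alpha}(z)\sim \dist(z,\partial U_\alpha)$ for $z$ near $1$ inside $U_\alpha$ (Littlewood-type estimate, $N_{\vf_\alpha}(z)\lesssim\log(1/|\vf_\alpha^{-1}(z)|)$ and this is $\sim 1-|\vf_\alpha^{-1}(z)|\sim t^{\alpha}$ where $t\sim\dist(z,\partial U_\alpha)$), so $N_{\vf_\alpha}(z)\sim t^{\alpha}$ while $1-|z|^2$ can be as small as $\sim t$; then $\int_\D (N_{\vf_\alpha}/(1-|z|^2))^{p/2}(1-|z|^2)^{-2}\,dA$ over the sector-shaped region near $1$ is $\sim\int_0^1 (t^{\alpha}/t)^{p/2}t^{-2}\cdot t\,dt=\int_0^1 t^{p(\alpha-1)/2-1}\,dt$, which diverges for every $p>0$ since $\alpha-1<0$; hence $C_{\vf_\alpha}:H^2\to H^2$ is in no $\mathcal S_p$, while compactness follows from \eqref{eq:comp}-type / Shapiro's criterion because $N_{\vf_\alpha}(z)/(1-|z|^2)\sim t^{\alpha-1}\to\infty$ — wait, that blows up, so one instead checks Shapiro's compactness criterion $N_{\vf_\alpha}(z)=o(1-|z|)$ fails pointwise but the right statement is $\limsup_{|z|\to1}N_{\vf_\alpha}(z)/\log\frac1{|z|}$; I would instead argue compactness directly from the fact that $U_\alpha\subset\D$ touches $\T$ at a single point with a genuine corner $\pi\alpha<\pi$, a standard situation where $C_{\vf_\alpha}$ on $H^2$ is compact but not Schatten (this is classical, cf. Shapiro's book).

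So the skeleton is: (i) build $U_\alpha$ with a single corner at $1$ of angle $\pi\alpha$; (ii) establish the local power-law for $\vf_\alpha^{-1}$ near $1$ and hence the scalings $\mu_{\vf_\alpha}(S_n)\sim 2^{-n\alpha}$ and $N_{\vf_\alpha}(z)\sim\dist(z,\partial U_\alpha)^{\alpha}$ near $1$; (iii) feed these into \eqref{eq:Sp} to get part (1) and into Corollary \ref{cor:compdiscrete} / Theorem \ref{thm:integralcond} to get part (2); (iv) handle the contribution away from $1$ trivially since $\overline{U_\alpha}\cap\T=\{1\}$. I expect the main technical obstacle to be item (ii): pinning down the comparability $N_{\vf_\alpha}(z)\sim\dist(z,\partial U_\alpha)^{\alpha}$ uniformly near the corner (the upper bound via Littlewood's inequality / subordination is easy; the matching lower bound requires a quantitative estimate on $|\vf_\alpha^{-1}(z)|$ near $1$, i.e. good control of the conformal map of a sector up to the boundary corner, which is standard but must be stated carefully), together with verifying that the various Whitney-box counts near $1$ produce exactly the exponent in the claimed $p$-threshold rather than an off-by-a-shift version — and, relatedly, confirming the precise reading of the inequality ``$p>2\alpha/(\alpha-1)$'' in statement (2).
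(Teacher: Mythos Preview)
Your plan has a genuine gap in the construction of $U_\alpha$. A plain sector with vertex at $1$ and both sides meeting $\T$ transversally does \emph{not} give part~(1). For such a domain $|\eta_\alpha'(z)|\sim|1-z|^{1/\alpha-1}$ (the inverse map stretches by the power $1/\alpha$, not $\alpha$; your write-up has this backwards, which is why you arrive at the incorrect formula $N_{\vf_\alpha}\sim\dist(z,\partial U_\alpha)^\alpha$), and the Koebe estimate gives $N_{\vf_\alpha}(z)\sim\dist(z,\partial U_\alpha)\,|1-z|^{1/\alpha-1}$. Since the approach to $1$ inside such a sector is non-tangential, one has $\dist(z,\partial U_\alpha)\lesssim|1-z|\sim1-|z|$ there, and a direct computation of \eqref{eq:Sp} reduces to $\int_0^1 t^{p(1-\alpha)/(2\alpha)-1}\,dt<\infty$ for every $p>0$. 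Thus $C_{\vf_\alpha}:H^2\to H^2$ lies in \emph{all} Schatten classes, not in none; the appeal to ``classical'' is mistaken. Corners with non-tangential contact always produce Schatten membership, and what is needed for ``compact but in no $\mS_p$'' is a boundary arc approaching $\T$ slower than any power.

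This is exactly the idea the paper supplies. One first builds $V_\alpha$ bounded by the upper half of $\T$ and an arc $\gamma_\alpha$ meeting it at angle $\pi\alpha$; via Theorem~\ref{th:emb}, $C_{\psi_\alpha}:K_\vt\to H^2$ becomes a composition operator into $E^2(D_\delta)$, where $\partial D_\delta$ is a circle internally tangent to $\T$ at $1$, and Y.~Zhu's tangential-contact result yields the threshold $p>2\alpha/(1-\alpha)$ (you are right that the printed $2\alpha/(\alpha-1)$ is a sign slip). One then passes to $U_\alpha\subset V_\alpha$ by replacing the arc of $\T$ near $1$ with the curve $\tau=\{re^{i\theta}:r=1-e^{-1/\theta}\}$. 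The corner angle at $1$ remains $\pi\alpha$, so part~(2) follows immediately by factoring $C_{\vf_\alpha}$ through $C_{\psi_\alpha}$. For part~(1), on the strip between $\tau$ and $\{r=1-2e^{-1/\theta}\}$ one has $\dist(z,\partial U_\alpha)\sim1-|z|$ while $|1-z|\sim\theta$ ranges up to $1/\log\frac{2}{1-r}$; hence $N_{\vf_\alpha}(z)/(1-|z|)\sim|1-z|^{1/\alpha-1}\to0$ (compactness), yet
\[
\int\Bigl(\frac{N_{\vf_\alpha}(z)}{1-|z|}\Bigr)^{p/2}\frac{dA(z)}{(1-|z|)^{2}}
\;\gtrsim\;
\int_{0}^{\epsilon}\frac{\bigl(\log\tfrac{2}{u}\bigr)^{-p(1-\alpha)/(2\alpha)-1}}{u^{2}}\,du=\infty
\]
for every $p>0$, since no power of a logarithm can compensate $u^{-2}$. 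The sub-polynomial approach of $\tau$ to $\T$ is the mechanism your sketch is missing.
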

Note that $C_{\vf_\alpha} : K_\vt \to H^2$ is in $\mS_p$ for one Riemann map $\vf_\alpha : \DD \to U_\alpha$ if and only if it is in $\mS_p$ for all such Riemann maps. The same statement is obviously true for $C_{\vf_\alpha} : H^2 \to H^2$.

Let $V_\alpha \subset \DD$ be the simply connected domain whose boundary consists of the upper half-circle $\{z \, : \, |z| = 1, \, \Im z \geq 0\}$ and the circular arc with terminal points $-1$ and $1$, intersecting the upper half-circle at an interior angle $\pi \alpha$ at those points. Note that $V_\alpha = \kappa(A_\alpha)$, where 
\begin{equation*}
A_\alpha = \{r e^{i\theta} \, : \, 0 < r < \infty, \, 0 < \theta < \pi \alpha \},
\end{equation*}
and $\kappa$ is the M\"obius map
\begin{equation*}
\kappa(w) = \frac{1+iw}{1-iw}.
\end{equation*}

Let $\psi_\alpha : \DD \to V_\alpha$ be a Riemann map. The next lemma says that $C_{\psi_\alpha} : K_\vt \to H^2$ is in $\mS_p$ if and only if $p > 2\alpha/(\alpha -1)$. This will immediately imply item (2) of Proposition \ref{prop:pwex}, as we will choose $U_\alpha$ as a subset of $V_\alpha$, so that $C_{\vf_\alpha} = C_\tau C_{\psi_\alpha}$ for a composition operator $C_{\tau} : H^2 \to H^2$ with $\tau$ a Riemann map from $\D$ to $\psi_\alpha^{-1}(U_\alpha)$.
\begin{lemma}
$C_{\psi_\alpha} : K_\vt \to H^2$ is $p$-Schatten if and only if $p > 2\alpha/(\alpha -1)$.
\end{lemma}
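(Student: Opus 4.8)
The plan is to reduce the claim to the integral criterion of Theorem~\ref{thm:integralcond}, applied to $\vf = \psi_\alpha$. For the Paley--Wiener inner function $\vt(z) = \exp(-\frac{1+z}{1-z})$, the quantity $\dist(z,\partial D_\delta) \sim (1-|z|^2)/(1-|\vt(z)|^2)$ behaves like $|1-z|^2$ near the point $1$ (and like $1-|z|^2$ away from $1$), so the integral in Theorem~\ref{thm:integralcond} becomes, up to constants,
\[
\int_{V_\alpha}\left(\frac{N_{\psi_\alpha}(z)}{|1-z|^2}\right)^{p/2}\frac{dA(z)}{|1-z|^4},
\]
the integral being effectively over a neighbourhood of $1$ in $V_\alpha$ since elsewhere the Nevanlinna function decays fast and the weights are comparable to the ordinary $H^2$ weights, where $C_{\psi_\alpha}$ is already known to be bounded (even compact, since $\psi_\alpha(\D) = V_\alpha \Subset \D$ away from $\pm 1$). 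So the first step is to record this localization and the asymptotics of the weights, reducing everything to the geometry of $V_\alpha$ near its corner at $1$.

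Next I would estimate the Nevanlinna counting function $N_{\psi_\alpha}(z)$ for $z \in V_\alpha$ near $1$. Since $\psi_\alpha$ is a Riemann map onto $V_\alpha$, $N_{\psi_\alpha}(z) = \log(1/|\psi_\alpha^{-1}(z)|) \sim 1 - |\psi_\alpha^{-1}(z)|$, so I need the boundary behaviour of the conformal map $\psi_\alpha^{-1}: V_\alpha \to \D$ near the corner. Using $V_\alpha = \kappa(A_\alpha)$ with $\kappa$ the given M\"obius map sending $0 \mapsto 1$, and the explicit power map $w \mapsto w^{1/\alpha}$ (composed with another M\"obius map) uniformizing the sector $A_\alpha$ onto a half-plane and then $\D$, standard conformal-map estimates give, for $z = \kappa(w)$ with $w \in A_\alpha$ small, $1 - |\psi_\alpha^{-1}(z)| \sim \dist(w,\partial A_\alpha)\,|w|^{1/\alpha - 1} \sim \dist(w, \partial A_\alpha)/|w| \cdot |w|^{1/\alpha}$; and $|1 - z| = |\kappa(w) - \kappa(0)| \sim |w|$. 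Plugging these in and passing to polar coordinates $w = \rho e^{i\sigma}$, $0 < \sigma < \pi\alpha$, $\rho \to 0$, reduces the finiteness of the integral to convergence of an elementary integral in $\rho$ whose exponent is a linear function of $p$ and $\alpha$.

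The third step is simply to carry out that one-variable integral: collecting powers of $\rho$ one finds the integrand behaves like $\rho^{c}\,d\rho$ near $\rho = 0$ with $c = c(p,\alpha)$, and setting $c > -1$ gives the threshold. The algebra should produce exactly the condition $p > 2\alpha/(\alpha-1)$; note $\alpha < 1$ makes $2\alpha/(\alpha-1)$ negative, so the statement says $C_{\psi_\alpha}: K_\vt \to H^2$ lies in $\mS_p$ for every $p > 0$ when $\alpha$ is small, and the constraint bites only as $\alpha \to 1^-$, which matches the intuition that a sharper corner (smaller $\alpha$) makes the operator better-behaved. I would also double-check the failure direction at $p = 2\alpha/(\alpha-1)$ (when this is positive, i.e. $\alpha$ large): there the integral diverges logarithmically, so $C_{\psi_\alpha} \notin \mS_p$, giving the "only if" part.

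The main obstacle is the conformal-mapping estimate in the second step: getting the correct power-law behaviour of $1 - |\psi_\alpha^{-1}(z)|$ uniformly as $z$ approaches the corner at $1$ along all directions inside $V_\alpha$, including comparing $\dist(\psi_\alpha^{-1}(z), \T)$ with the Euclidean quantities in $V_\alpha$. This requires the standard distortion theorems for conformal maps near a boundary corner (as in \cite{Pbook}), applied with care at the two corners of $V_\alpha$ (at $1$ and at $-1$); the corner at $-1$ contributes a similar but milder term that is absorbed because the weight $1/|1-z|^2$ from Theorem~\ref{thm:integralcond} is bounded there, so only the corner at $1$ governs the threshold. Everything else is bookkeeping with the explicit maps $\kappa$ and $w \mapsto w^{1/\alpha}$.
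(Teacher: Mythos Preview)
Your overall plan---apply Theorem~\ref{thm:integralcond} directly and reduce to an explicit integral near the corner at $z=1$---is precisely the alternative route the paper itself alludes to (``A direct computational proof based on our integral criterion can be also given''). The paper's actual argument is different and shorter: it invokes Theorem~\ref{th:emb} to replace $K_\vt$ by $E^2(D_\delta)$, observes that for $\vt(z)=\exp\bigl(-\tfrac{1+z}{1-z}\bigr)$ the level domain $D_\delta$ is a round disc with $\T$ internally tangent to $\partial D_\delta$ at the single point $1$, and then simply cites Y.~Zhu~\cite{yzhu01}, who already determined the Schatten threshold for Riemann maps onto domains touching the boundary circle tangentially from one side. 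Your approach trades that citation for an explicit calculation.

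Your calculation, however, contains a genuine error. The claim that $\dist(z,\partial D_\delta)\sim|1-z|^{2}$ near $1$ is false in general: in the coordinate $w=i(1-z)/(1+z)=\rho e^{i\sigma}\in A_\alpha$ one has $1-|z|^{2}\sim\rho\sin\sigma$ and $1-|\vt(z)|^{2}=1-e^{-\sin\sigma/\rho}$, so
\[
\dist(z,\partial D_\delta)\ \sim\ \frac{1-|z|^{2}}{1-|\vt(z)|^{2}}\ \sim\
\begin{cases}
\rho^{2},&\sin\sigma\lesssim\rho,\\[1mm]
\rho\sin\sigma,&\sin\sigma\gtrsim\rho.
\end{cases}
\]
Replacing $\dist$ by $\rho^{2}$ throughout, as you do, yields the radial integral $\int_0 \rho^{\,p(1-2\alpha)/(2\alpha)-3}\,d\rho$ and hence the wrong threshold $p>4\alpha/(1-2\alpha)$ (and no Schatten membership at all once $\alpha\ge 1/2$). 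The honest computation must treat the two regimes separately; after integrating in $\rho$ one is left with $\int_0^{\pi\alpha}\sigma^{\,p(1-\alpha)/(2\alpha)-2}\,d\sigma$, and it is this \emph{angular} integral that produces the correct threshold $p>2\alpha/(1-\alpha)$. (The printed statement has a sign slip: $2\alpha/(\alpha-1)$ should read $2\alpha/(1-\alpha)$, which is positive for every $\alpha\in(0,1)$; your remark that the constraint is vacuous for small $\alpha$ is an artifact of that typo, not of the mathematics.)
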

\begin{proof}
Y. Zhu \cite{yzhu01} obtains the corresponding result on $H^2$, for domains similar to $V_\alpha$, rescaled and translated to only touch (tangentially from one side) the unit circle at $z = 1$. In view of the fact that $C_{\psi_\alpha} : K_\vt \to H^2$ belongs to $\mS_p$ if and only if $C_{\psi_\alpha} : E^2(\Dd) \to H^2$ belongs to $\mS_p$ and the observation that $\partial D_\delta$ in this case is a circle such that $\T$ is internally tangent to $\partial D_\delta$ at 1, the lemma follows. 
A direct computational proof based on our integral criterion can be also given.
\end{proof}

We now define $U_\alpha$. Let $\gamma_\alpha$ be the circular arc of $\partial V_\alpha$ which is not the upper half-circle, $\gamma_\alpha = \{z \in \partial V_\alpha \, : \, |z| \neq 1\} \cup \{-1,1\}$. Close to $1$, we choose $\partial U_\alpha$ to coincide with the union of $\gamma_\alpha$ and
\begin{equation*}
\tau = \{re^{i\theta} \, : \, 0 < \theta \leq \pi/2, \, r = 1-e^{-1/\theta}\}.
\end{equation*}
That is, let $U_\alpha$ be a simply connected domain contained in $V_\alpha$, such that $\partial U_\alpha \cap \T = \{1\}$ and for sufficiently small $\epsilon > 0$ we have
\begin{equation*}
\partial U_\alpha \cap B_\epsilon(1) = (\gamma_\alpha \cup \tau) \cap B_\epsilon(1),
\end{equation*}
where $B_\epsilon(1)$ is a ball centered at $1$ of radius $\epsilon$.

\begin{proof}[Proof of Proposition \ref{prop:pwex}]
We have already proven the validity of item (2). It remains to prove that $C_{\vf_\alpha} : H^2 \to H^2$ is compact but fails to lie in any Schatten class $\mS_p$. For simplicity in presentation we will assume that $\alpha < 1/2$. The proof for $\alpha \geq 1/2$ is very similar and follows exactly the same ideas. Without loss of generality we further assume that $\vf_\alpha(1) = 1$. 

Let $\eta_\alpha = \vf_\alpha^{-1}$. For $z \in U_\alpha$ close to $1$ we have that $N_{\vf_\alpha}(z) \sim 1 - |\eta_\alpha(z)|$. We will again make use of the inequality
\begin{equation*}
1 - |\eta_\alpha(z)| \sim \dist(z, \partial U_\alpha) |\eta_\alpha'(z)|.
\end{equation*}
Since $\eta_\alpha$ maps $U_\alpha$, a domain with a corner of angle $\pi \alpha$ at $1$, conformally onto the unit disc, it follows that $|\eta_\alpha'(z)| \sim |z-1|^{1/\alpha -1}$ for $z$ close to $1$ (see e.g. \cite{Pbook}). Therefore, for $z \in U_\alpha$ close to $1$ we have
\begin{equation*}
N_{\vf_\alpha}(z) \sim \dist(z, \partial U_\alpha) |z-1|^{1/\alpha -1}.
\end{equation*}
This estimate obviously implies that $\lim_{|z| \to 1}\frac{N_{\vf_\alpha}(z)}{1-|z|} = 0$ so that $C_{\vf_\alpha}$ is compact on $H^2$ \cite{S}, and we will now use it to show that
\begin{equation*}
\int_{U_\alpha \cap B_\epsilon(1)} \frac{N_{\psi_\alpha}(z)^{p/2}}{(1-|z|^2)^{p/2+2}} \, dA(z) = \infty
\end{equation*}
for every $p > 0$, proving in view of \eqref{eq:Sp} the desired statement.

We will restrict ourselves to considering points in the set
\begin{equation*}
U_\alpha' = \{z = re^{i\theta} \in B_\epsilon(1) \, : \, r < 1 - 2e^{-1/\theta}, \, \theta > k(1-r)\},
\end{equation*}
 for a fixed large constant $k$. $k$ should be chosen so large and then $\epsilon$ so small that there exists a constant $C > 1/\tan(\pi\alpha)$ such that every point $re^{i\theta} \in U_\alpha'$ satisfies $r\sin \theta > C(1-r\cos\theta)$. In other words, $U_\alpha'$ should lie above the line $\Im z = C(1-\Re z)$, which intersects the unit circle in the point $1$ at an angle smaller than $\pi \alpha$. If necessary, we decrease $\epsilon$ further, attaining that $U_\alpha' \subset U_\alpha$ and 
\begin{equation*}
\dist(z, \partial U_\alpha) \sim 1 - e^{-1/\theta} - r \sim 1 - |z|, \quad z \in U_\alpha'.
\end{equation*}

The proof is now finished by the following chain of inequalities.
\begin{multline*}
\int_{U_\alpha'} \frac{N_{\psi_\alpha}(z)^{p/2}}{(1-|z|^2)^{p/2+2}} \, dA(z) \sim \int_{U_\alpha'} \frac{|1-z|^{\frac{p(1-\alpha)}{2\alpha}}}{(1-|z|^2)^{2}} \, dA(z) \\ \gtrsim \int_{1-\epsilon'}^1 \int_{k(1-r)}^{\frac{1}{\log \frac{2}{1-r}}} \frac{(1-2r\cos\theta + r^2)^{\frac{p(1-\alpha)}{4\alpha}}}{(1-r)^2} \, d\theta \, dr \\ \gtrsim  \int_{1-\epsilon'}^1 \int_{k(1-r)}^{\frac{1}{\log \frac{2}{1-r}}} \frac{\theta^{\frac{p(1-\alpha)}{2\alpha}}}{(1-r)^2} \, d\theta \, dr 
\gtrsim \int_{1-\epsilon'}^1 \frac{\left(\frac{1}{\log \frac{2}{1-r}}\right)^{\frac{p(1-\alpha)}{2\alpha}+1}}{(1-r)^2} \, d\theta \, dr
= \infty,
\end{multline*}
where $0 < \epsilon' < \epsilon$ is sufficiently small.
\end{proof}

\section{ Remarks on composition operators on general model spaces}
In this section we do not assume that the inner function $\vt$ is 
one-component.
\subsection{The Hilbert-Schmidt norm}   
Denote the reproducing kernel of $K_\vt$ at $w$ by $k(w, \, \cdot \,)$,
\[
k(w,z)=\frac{1-\overline{\vt(w)}\vt(z)}{1-\bar{w}z}, \quad z \in \D.
\]
\begin{proposition}\label{p:HS}
The Hilbert-Schmidt norm of the composition operator $C_\vf: K_\vt\rightarrow H^2$ is given by
the expression
\[
\|C_\vf\|^2_{HS}=1-|\vt(0)|^2+\frac12\int_{\DD}\Delta k(z,z) N_\vf(z)dA(z).
\] 
\end{proposition}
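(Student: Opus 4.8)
The plan is to compute the Hilbert-Schmidt norm by expanding $C_\vf$ in an orthonormal basis and converting the resulting sum into an integral via the Stanton formula \eqref{eq:nvf}. Let $\{e_n\}$ be an orthonormal basis of $K_\vt$. Then $\|C_\vf\|_{HS}^2 = \sum_n \|C_\vf e_n\|_{H^2}^2$, and by \eqref{eq:nvf} this equals $\sum_n \left( 2\int_\DD |e_n'(z)|^2 N_\vf(z)\, dA(z) + |e_n(\vf(0))|^2 \right)$. The second sum is $\sum_n |e_n(\vf(0))|^2 = k(\vf(0), \vf(0))$ by the reproducing property, but I should be a bit careful: $k(\vf(0),\vf(0)) = \frac{1-|\vt(\vf(0))|^2}{1-|\vf(0)|^2}$, which is not obviously $1-|\vt(0)|^2$. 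The resolution is that $\mu_\vf$ has total mass $1$ and the Hilbert-Schmidt computation should really be organized so the "constant term" collects correctly; alternatively one notes $f(\vf(0)) = \int_\T f\circ\vf\, ds = \langle C_\vf f, 1\rangle_{H^2}$ only when $1 \in H^2$ pairs correctly, and summing $|\langle C_\vf e_n, 1\rangle|^2$ over $n$ gives $\|C_\vf^* 1\|_{K_\vt}^2 = \|k(\,\cdot\,, \text{something})\|^2$. I would track this constant carefully; the intended answer $1-|\vt(0)|^2$ suggests that after interchanging sum and integral in the main term, the leftover is exactly $\|k(0,\,\cdot\,)\|_{K_\vt}^2 = k(0,0) = 1-|\vt(0)|^2$, which means the term $|f(\vf(0))|^2$ recombines with part of the area integral. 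This bookkeeping is the first point to get right.

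Next I would handle the main term $2\sum_n \int_\DD |e_n'(z)|^2 N_\vf(z)\, dA(z)$. Interchanging summation and integration (justified by Tonelli, since everything is nonnegative), this becomes $2\int_\DD \left( \sum_n |e_n'(z)|^2 \right) N_\vf(z)\, dA(z)$. The key identity is that $\sum_n |e_n'(z)|^2$ is the reproducing kernel for the derivatives: since $f'(z) = \langle f, \partial_{\bar w} k(\,\cdot\,, \cdot)|_{\cdots}\rangle$ appropriately interpreted, one gets $\sum_n |e_n'(z)|^2 = \partial_z \partial_{\bar z} k(z,z)$ where the diagonal kernel $k(z,z)$ is differentiated as a function on $\DD$ viewed in $z$ and $\bar z$. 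I would make this precise: writing $K(w,z) = k(w,z)$ for the kernel with $\langle f, K(w,\cdot)\rangle = f(w)$, one has $e_n'(w) = \overline{\langle \partial_w K(w,\cdot), e_n\rangle}$... the cleanest route is $\sum_n |e_n'(w)|^2 = \frac{\partial^2}{\partial w \partial \bar w} k(w,w)$ interpreting $k(w,w)$ via the polarized kernel. Then $2 \cdot \frac{\partial^2}{\partial w\partial\bar w} = \frac12 \Delta$ (since $\Delta = 4\partial_w\partial_{\bar w}$), which produces exactly the factor $\frac12 \Delta k(z,z)$ in the statement.

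The main obstacle I anticipate is the constant term bookkeeping: showing that the contribution of $|f(\vf(0))|^2$ summed over the basis, together with whatever is left, collapses to precisely $1 - |\vt(0)|^2$ rather than $k(\vf(0),\vf(0))$. My expectation is that the resolution uses the identity $|f(\vf(0))|^2 = |\langle C_\vf f, 1_{H^2}\rangle_{H^2}|^2$ together with the fact that $C_\vf$ maps into $H^2$ and $1 \in H^2$, so $\sum_n |f_n(\vf(0))|^2 = \|P_{K_\vt} C_\vf^* 1\|^2$; but $C_\vf^* 1$ acting on $K_\vt$, evaluated against $k(w,\cdot)$, gives $\langle C_\vf^* 1, k(w,\cdot)\rangle = \langle 1, C_\vf k(w,\cdot)\rangle = \overline{(C_\vf k(w,\cdot))(0)}$... and $k(w,\vf(0)) = k(w, 0)$ only if $\vf(0) = 0$. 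So in general the constant is $k(\vf(0),\vf(0))$, and the stated formula must be using a normalization or the author may intend $N_\vf$ to already encode the point mass at $\vf(0)$, or there is a cancellation with a boundary term from integrating $\Delta k$ by parts. I would resolve this by instead arguing directly: $\|C_\vf\|_{HS}^2 = \int_{\overline\DD} k(w,w)\, d\mu_\vf(w) = \int_{\overline\DD} \frac{1-|\vt(w)|^2}{1-|w|^2}\, d\mu_\vf(w)$ (the Hilbert-Schmidt norm of an embedding is the integral of the kernel diagonal), then use the change of variables/Green's formula identity relating $\int_{\overline\DD} g(w)\, d\mu_\vf(w)$ to $g(\vf(0)) + \frac12\int_\DD \Delta g(z)\, N_\vf(z)\, dA(z)$ — this is the classical consequence of the Littlewood-Paley/Stanton identity applied to $g = |h|^2$ and then extended — applied to $g(w) = k(w,w)$, giving the constant $k(\vf(0),\vf(0))$; reconciling this with $1-|\vt(0)|^2$ is then the genuinely delicate point, likely requiring $\vf(0)=0$ be assumed WLOG or a more careful statement. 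I would present the Stanton-type identity $\int_{\overline\DD} g\, d\mu_\vf = g(\vf(0)) + \frac12 \int_\DD \Delta g \cdot N_\vf\, dA$ as the engine and apply it to the diagonal kernel, flagging the normalization of the constant term as the one step needing care.
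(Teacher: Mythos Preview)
Your approach is exactly the paper's: take an orthonormal basis $\{e_j\}$ of $K_\vt$, apply the Stanton formula \eqref{eq:nvf} term by term, interchange sum and integral, and use $\sum_j |e_j'(z)|^2 = \tfrac14\,\Delta k(z,z)$. That is the entire argument in the paper; there is nothing more to it.

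Your unease about the constant term is justified, and the paper does not resolve it either. The paper's proof writes $\sum_j |e_j(0)|^2 = k(0,0) = 1-|\vt(0)|^2$, but the Stanton formula as quoted in \eqref{eq:nvf} actually produces $|e_j(\vf(0))|^2$, hence $k(\vf(0),\vf(0)) = \dfrac{1-|\vt(\vf(0))|^2}{1-|\vf(0)|^2}$. There is no hidden cancellation, no boundary term, and no recombination with the area integral: this is simply a slip in the paper (or an unstated normalization $\vf(0)=0$). Every subsequent use of the proposition concerns only the finiteness of the integral term, so the value of the additive constant is immaterial. Stop trying to reconcile $k(0,0)$ with $k(\vf(0),\vf(0))$; write the constant as $k(\vf(0),\vf(0))$ and move on.
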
 

\begin{proof}
Let $\{e_j(z)\}$ be any orthonormal basis in $K_\vt$. Then $k(z,z)=\sum_j|e_j(z)|^2$ and the Stanton formula \eqref{eq:nvf} yields
\[
\|C_\vf\|^2_{HS}=\sum_j(C_\vf e_j, C_\vf e_j)=\sum_j \left(|e_j(0)|^2+2\int_\DD |e_j'|^2N_\vf dA\right).\]
Since $\sum_j|e_j'(z)|^2= \frac{1}{4}\Delta\sum_j|e_j(z)|^2$, we obtain
\[
\|C_\vf\|_{HS}^2 =k(0,0)+\frac12\int_\DD \Delta k(z,z) N_\vf(z)dA(z).\]
\end{proof}

\begin{corollary} 
(i)  If 
$$
\int_\DD\frac{1-|\vt(z)|^2}{(1-|z|^2)^3}N_\vf(z) \, dA(z)<\infty
$$ then $C_\vf:K_\vt\rightarrow H^2$ is a Hilbert-Schmidt operator;

(ii) If $C_\vf:K_\vt\rightarrow H^2$ is a Hilbert-Schmidt operator then 
$$
\int_\DD\frac{(1-|\vt(z)|^2)^3}{(1-|z|^2)^3}N_\vf(z) \, dA(z)<\infty.
$$
\end{corollary}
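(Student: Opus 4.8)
The plan is to deduce both parts of the corollary from Proposition \ref{p:HS} by computing, or rather two-sidedly estimating, the Laplacian $\Delta k(z,z)$ of the diagonal of the reproducing kernel. First I would write $k(z,z) = \frac{1-|\vt(z)|^2}{1-|z|^2}$ and apply the Laplacian. Since the reproducing kernel expands as $k(z,z) = \sum_j |e_j(z)|^2$ for any orthonormal basis of $K_\vt$, the function $k(z,z)$ is logarithmically plurisubharmonic-flavoured; more concretely $\Delta k(z,z) = 4\sum_j |e_j'(z)|^2 \geq 0$, so the integrand in Proposition \ref{p:HS} is nonnegative, and the two statements of the corollary will follow from upper and lower pointwise bounds on $\Delta k(z,z)$ of the form
\[
\frac{(1-|\vt(z)|^2)^3}{(1-|z|^2)^3} \lesssim \Delta k(z,z) \lesssim \frac{1-|\vt(z)|^2}{(1-|z|^2)^3}
\]
(up to terms that are harmless against $N_\vf$, or absorbed into the stated inequalities).

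For the explicit computation I would use the product/quotient rule for $\Delta$. Writing $u(z) = 1-|\vt(z)|^2$ and $v(z) = 1-|z|^2$, one has $k = u/v$ and
\[
\Delta(u/v) = \frac{\Delta u}{v} - \frac{2\nabla u \cdot \nabla v}{v^2} - \frac{u \Delta v}{v^2} + \frac{2u|\nabla v|^2}{v^3}.
\]
Here $\Delta v = -4$, $\nabla v = -2\bar z$ (in the Wirtinger sense $\Delta = 4\partial\bar\partial$), and $\Delta u = -4|\vt'(z)|^2$. The dominant term for the upper bound is the last one, $2u|\nabla v|^2/v^3 \sim (1-|\vt(z)|^2)/(1-|z|^2)^3$ near the boundary, which gives part (i): if the majorant integral is finite then $\|C_\vf\|_{HS}^2 < \infty$. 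The other terms involving $|\vt'|^2$ must be controlled; here I would invoke the standard Schwarz–Pick type bound $|\vt'(z)| \le \frac{1-|\vt(z)|^2}{1-|z|^2}$, which shows $|\Delta u|/v = 4|\vt'|^2/v \lesssim (1-|\vt(z)|^2)^2/(1-|z|^2)^3 \le (1-|\vt(z)|^2)/(1-|z|^2)^3$, so all terms respect the claimed majorant.

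For part (ii) I need a lower bound $\Delta k(z,z) \gtrsim (1-|\vt(z)|^2)^3/(1-|z|^2)^3$. The natural way is to bound $\sum_j |e_j'(z)|^2$ from below using a single well-chosen element of $K_\vt$, namely the normalized reproducing kernel $k_w/\|k_w\|$ evaluated appropriately, or to use $\sum_j|e_j'(z)|^2 \ge |g'(z)|^2/\|g\|^2$ for any $g \in K_\vt$. Taking $g = k(w,\cdot)$ and then setting $w = z$ after differentiating gives $\Delta k(z,z) \ge 4|\partial_z k(z,w)|_{w=z}|^2 / k(z,z)$; a direct computation of $\partial_z k(z,w)$ and specialization shows this is comparable to $\big(\frac{1-|\vt(z)|^2}{(1-|z|^2)^2}\big)^2 / \frac{1-|\vt(z)|^2}{1-|z|^2} = \frac{(1-|\vt(z)|^2)^3}{(1-|z|^2)^3}$ modulo lower-order terms involving $\vt'$. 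I expect the main obstacle to be this lower bound: one must make sure the $\vt'$-dependent cross terms do not cancel the main term, which on the "bad" set where $|\vt'(z)|$ is comparable to its maximal value $(1-|\vt(z)|^2)/(1-|z|^2)$ requires a slightly careful argument — but since in that regime the upper and lower target expressions $(1-|\vt|^2)/(1-|z|^2)^3$ and $(1-|\vt|^2)^3/(1-|z|^2)^3$ need not agree, one only needs the lower bound to hold after integration against $N_\vf \ge 0$, which makes the estimate robust. Finally, the additive constant $1-|\vt(0)|^2$ in Proposition \ref{p:HS} is bounded and contributes nothing to the finiteness questions, so it is dropped throughout.
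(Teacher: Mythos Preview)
Your plan for part~(i) is essentially the paper's argument: compute $\Delta k(z,z)$ explicitly and bound each term from above using the Schwarz--Pick inequality $|\vt'(z)|\le \frac{1-|\vt(z)|^2}{1-|z|^2}$, landing on $\Delta k(z,z)\lesssim \frac{1-|\vt(z)|^2}{(1-|z|^2)^3}$.

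Part~(ii), however, has a genuine gap. First, an arithmetic slip: $\big(\frac{1-|\vt(z)|^2}{(1-|z|^2)^2}\big)^2 \big/ \frac{1-|\vt(z)|^2}{1-|z|^2} = \frac{1-|\vt(z)|^2}{(1-|z|^2)^3}$, not $\frac{(1-|\vt(z)|^2)^3}{(1-|z|^2)^3}$. More seriously, the pointwise lower bound $\Delta k(z,z)\gtrsim \frac{(1-|\vt(z)|^2)^3}{(1-|z|^2)^3}$ that you are aiming for simply fails. The explicit computation gives
\[
\tfrac14\Delta k(z,z)=\frac{(1+|z|^2)(1-|\vt(z)|^2)}{(1-|z|^2)^3}-\frac{2\Re(z\overline{\vt(z)}\vt'(z))}{(1-|z|^2)^2}-\frac{|\vt'(z)|^2}{1-|z|^2},
\]
and using Schwarz--Pick the sharp lower bound is $\frac{4(|z|-|\vt(z)|)^2(1-|\vt(z)|^2)}{(1-|z|^2)^3}$, which vanishes on the set $\{|z|=|\vt(z)|\}$. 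Your variational approach via $g=k(z,\cdot)$ hits the same wall: $\partial_w k(z,w)\big|_{w=z}$ can vanish when the $\vt'$-term cancels the $\bar z$-term exactly. The claim that integration against $N_\vf\ge 0$ ``makes the estimate robust'' is not an argument; nothing prevents $N_\vf$ from being concentrated near $\{|z|=|\vt(z)|\}$.

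The paper resolves this with a short trick you are missing: $C_\vf:K_\vt\to H^2$ is Hilbert--Schmidt if and only if $C_\vf:K_{z\vt}\to H^2$ is, since $K_{z\vt}=K_z\oplus zK_\vt$ with $\dim K_z=1$. Applying the above lower bound to $z\vt$ in place of $\vt$, the degenerate factor becomes $(|z|-|z\vt(z)|)^2=|z|^2(1-|\vt(z)|)^2$, which is $\gtrsim (1-|\vt(z)|^2)^2$ on $\{|z|\ge 1/2\}$, and the integral estimate over $\DD\setminus\frac12\DD$ (hence over $\DD$) follows.
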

\begin{proof} 
 A direct calculation shows that
\[
\frac14\Delta k(z,z)=\frac{(1+|z|^2)(1-|\vt(z)|^2)}{(1-|z|^2)^3}-2\frac{\Re(z\overline{\vt(z)}\vt'(z))}{(1-|z|^2)^2}-\frac{|\vt'(z)|^2}{1-|z|^2}.
\]
Applying the standard inequality $|\vt'(z)|\le (1-|\vt(z)|^2)(1-|z|^2)^{-1}$, 
\begin{equation}
\label{eq:DK}
4\frac{(|z|-|\vt(z)|)^2(1-|\vt(z)|^2)}{(1-|z|^2)^3}\le\Delta k(z,z)\le 4\frac{(1+|z|)^2(1-|\vt(z)|^2)}{(1-|z|^2)^3}.
\end{equation}
 
 Statement {\em (i)}  is an immediate consequence of the right hand side inequality in \eqref{eq:DK}.

In order to prove statement {\em(ii)}  we first observe that  if  $C_\vf:K_{\vt}\rightarrow H^2$ is a Hilbert-Schmidt operator then so is $C_\vf:K_{z\vt}\rightarrow H^2$,  since $K_{z\vt}=K_z\oplus z K_\vt$ and $\dim K_z=1$.  Denote by 
$\tilde{k}(w,z)$ the reproducing kernel of $K_{z\vt}$ at $w$. We then have
\begin{multline*}
\int_{\DD\setminus\frac{1}{2}\DD}\frac{(1-|\vt(z)|^2)^3}{(1-|z|^2)^3}N_\vf(z) \le \\ 4\int_{\DD}\frac{(1-|z\vt(z)|^2)(|z|-|z\vt(z)|)^2}{(1-|z|^2)^3}N_\vf(z) dA(z) \le\\
\int_\DD\Delta \tilde{k}(z,z) N_\vf(z)dA(z)\le 2 \|C_\vf:K_{z\vt}\rightarrow H^2\|_{HS}^2.
\end{multline*}
\end{proof}

\begin{remark} 
In the previous section we proved  that the inequality 
\begin{equation} \label{eq:hsonecomp}
\int_\DD\frac{(1-|\vt(z)|^2)^3}{(1-|z|^2)^3}N_\vf(z) \, dA(z) <\infty
\end{equation}
gives a complete  description of the Hilbert-Schmidt composition operators $C_\vf : K_\vt \to H^2$ in the case of one-component $\vt$. This condition is not sufficient in general,  see Section \ref{subsec:counterex}
below.
\end{remark}


\subsection{A sufficient condition for larger Schatten ideals}
The Hilbert-Schmidt norm characterization, discussed for composition operators so far, remains true for the differentiation operator acting between $K_\vt$ and $L^2(\DD, \mu)$, where $\mu$ is a   positive, finite measure  on $\D$. Using complex interpolation we can obtain  a sufficient  condition  for a composition operator to be in the Schatten ideal $\mS_p$ when $2\le p<\infty$. 

First we assume that $\Phi(z)$ is a positive function on $\DD$ such that the differentiation $D:K_\vt\rightarrow L^2(\Phi(z) \, dA)$ is a bounded operator.
It follows from \eqref{eq:axler} that we can take
\[
\Phi(z)=\frac{1-|z|^2}{(1-|\vt(z)|^2)^b},\]
with $b\in(0,1/2)$. For the special case when $\vt$ is a one-component inner function we may choose
\[
\Phi(z)=\frac{1-|z|^2}{1-|\vt(z)|^2},\]
see \cite[Theorem 1]{C2}.
\begin{proposition}\label{p:sufSp}
Let $\Phi(z)$ be as above and $p\ge 2$. If 
\[
\int_\DD \left(\frac{N_\vf(z)}{\Phi(z)}\right)^{p/2}\Delta k(z,z)\Phi(z) \, dA(z) < \infty
\]
then $C_\vf:K_\vt\rightarrow H^2$ belongs to $\mS_p$.
\end{proposition}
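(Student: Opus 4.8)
\textbf{Proof proposal for Proposition \ref{p:sufSp}.}

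The plan is to realize $C_\vf$, up to a unitary and the Stanton formula, as a composition $D \circ (\text{embedding})$ and then interpolate between the endpoint cases $p=2$ (Hilbert--Schmidt, already available) and $p=\infty$ (boundedness from the hypothesis on $\Phi$). First I would use the Stanton formula \eqref{eq:nvf} to write, for $f \in K_\vt$,
\[
\|C_\vf f\|_{H^2}^2 = |f(\vf(0))|^2 + 2\int_\DD |f'(z)|^2 N_\vf(z)\, dA(z),
\]
so that, modulo the rank-one term $f \mapsto f(\vf(0))$ which lies in every $\mS_p$, the Schatten membership of $C_\vf$ is equivalent to that of the differentiation-type operator $T : K_\vt \to L^2(\DD, N_\vf\, dA)$, $Tf = f'$. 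The hypothesis says precisely that $T$ factors through $L^2(\Phi\, dA)$: the map $D : K_\vt \to L^2(\Phi\, dA)$ is bounded, and then multiplication by $(N_\vf/\Phi)^{1/2}$ sends $L^2(\Phi\, dA)$ into $L^2(N_\vf\, dA)$.

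Next I would set up the interpolation. Consider the operator $S = M_{(N_\vf/\Phi)^{1/2}} D$ from $K_\vt$ into the scale of weighted spaces; the point is to interpolate the \emph{weight}. On one side, when the multiplier weight is trivial we are looking at $D : K_\vt \to L^2(\Phi\, dA)$, which is bounded, i.e.\ lies in $\mS_\infty$. On the other side, the $p=2$ statement is exactly the Hilbert--Schmidt characterization: $D : K_\vt \to L^2(\rho\, dA)$ is Hilbert--Schmidt, with $\mS_2$-norm squared comparable to $\frac12\int_\DD \Delta k(z,z)\,\rho(z)\, dA(z)$, by the computation $\sum_j |e_j'(z)|^2 = \frac14\Delta k(z,z)$ used in Proposition \ref{p:HS}. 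Applying this with $\rho = (N_\vf/\Phi)\,\Phi \cdot (\text{extra factor})$ arranged so that the interpolation parameter $\theta = 2/p$ produces exactly the weight $(N_\vf/\Phi)\Phi = N_\vf$ in $L^2(N_\vf\, dA)$ and the $\mS_p$ bound $\big(\int_\DD (N_\vf/\Phi)^{p/2}\,\Delta k(z,z)\,\Phi\, dA\big)^{1/p}$, one concludes by the standard complex interpolation theorem for Schatten classes (Schatten--von Neumann ideals interpolate: $[\mS_\infty, \mS_2]_\theta = \mS_{2/\theta}$, and the weighted $L^2$ spaces $L^2(w_t\, dA)$ form a complex interpolation scale with $[L^2(w_0), L^2(w_1)]_\theta = L^2(w_0^{1-\theta}w_1^\theta)$). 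Concretely, interpolating the analytic family $z \mapsto M_{(N_\vf/\Phi)^{z/2}} D$ between $\re z = 0$ (bounded, since $|(N_\vf/\Phi)^{iy/2}|=1$ and $D$ is bounded on $L^2(\Phi\,dA)$, hence $\mS_\infty$) and $\re z = 1$ (Hilbert--Schmidt with norm controlled by $\int \Delta k(z,z)\, N_\vf\, dA$) at $z = 2/p$ yields precisely the claimed $\mS_p$ bound.

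The main obstacle I anticipate is making the interpolation rigorous rather than formal: one must check that $D : K_\vt \to L^2(\Phi\, dA)$ together with the Hilbert--Schmidt instance $D : K_\vt \to L^2(N_\vf\, dA)$ genuinely fits the hypotheses of the complex interpolation method for operators into a Banach space scale --- in particular that the analytic family $M_{(N_\vf/\Phi)^{z/2}}D$ depends analytically on $z$ in the required sense and is uniformly bounded on the strip, and that the target scale of weighted $L^2$ spaces (with possibly vanishing or unbounded weights $N_\vf$, $\Phi$) is well-behaved. A clean way around subtleties is to first truncate: replace $N_\vf$ by $\min(N_\vf, n)\mathbf 1_{\{|z| < 1 - 1/n\}}$, carry out the interpolation with bounded, bounded-away-from-zero weights where everything is classical, obtain the $\mS_p$ bound with a constant independent of $n$, and let $n \to \infty$ using lower semicontinuity of the $\mS_p$-norm. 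The remaining steps --- the Stanton reduction, the discarding of the rank-one term, and the identification of the $p=2$ endpoint with the $\Delta k$-integral --- are routine given Proposition \ref{p:HS} and \eqref{eq:nvf}.
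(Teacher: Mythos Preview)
Your overall strategy is exactly the one the paper uses (attributed to Luecking): reduce via Stanton to the differentiation operator $D:K_\vt\to L^2(N_\vf\,dA)$, truncate $N_\vf$ to make everything compact, and then run complex interpolation of Schatten ideals between a bounded endpoint coming from $D:K_\vt\to L^2(\Phi\,dA)$ and a Hilbert--Schmidt endpoint, invoking the Gohberg--Kre{\u\i}n interpolation theorem. So the plan is correct and essentially identical to the paper's proof.

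However, the concrete analytic family you write down is mis-scaled, and as stated the argument does not close. With $T_z=M_{(N_\vf/\Phi)^{z/2}}D:K_\vt\to L^2(\Phi\,dA)$, the $\Re z=1$ endpoint has Hilbert--Schmidt norm governed by $\int_\DD \Delta k(z,z)\,N_\vf\,dA$, which is \emph{not} the hypothesis of the proposition when $p>2$; and the operator you recover at $z=2/p$ is $M_{(N_\vf/\Phi)^{1/p}}D$, i.e.\ $D:K_\vt\to L^2((N_\vf/\Phi)^{2/p}\Phi\,dA)$, which is the desired $D:K_\vt\to L^2(N_\vf\,dA)$ only for $p=2$. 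The exponent on $N_\vf/\Phi$ must carry a factor of $p$ so that the Hilbert--Schmidt endpoint matches the assumed integral and the interpolated point lands on $N_\vf^{1/2}D$. The paper takes
\[
T_\zeta f(z)=N(z)^{p(1-\zeta)/4}\,\Phi(z)^{1/2-p(1-\zeta)/4}\,f'(z),\qquad 0\le\Re\zeta\le 1,
\]
so that $T_{1+iy}=\Phi^{1/2}D$ up to a unimodular factor (bounded), $T_{iy}$ has $\|T_{iy}\|_{HS}^2\lesssim\int(N/\Phi)^{p/2}\Delta k\,\Phi\,dA$ (the hypothesis), and $T_{1-2/p}=N^{1/2}D$ is exactly the operator you want in $\mS_p$. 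Replace your family by this one (or equivalently rescale $z\mapsto zp/2$) and your argument goes through; the truncation and the rank-one remark are fine.
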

\begin{proof}
The proof follows an  idea of Luecking \cite{L}; see also \cite{B}.
 By the Stanton formula \eqref{eq:nvf}, we want to prove that the differentiation operator $D:K_\vt\rightarrow L^2(N_\vf \, dA)$ belongs to the Schatten ideal $\mS_p$.
  It is enough to prove the corresponding statement where $N_\vf$ has been replaced by $N=\chi_r N_\vf$,
   $\chi_r$ being the characteristic function of the disk $r\DD$, as long as the corresponding Schatten norm
    is  independent  of $r$. 

Consider the following holomorphic family of compact operators 
\[
(T_\zeta f)(z)=N(z)^{p(1-\zeta)/4}\Phi(z)^{1/2-p(1-\zeta)/4}f'(z),\]
where $\zeta\in\CC$,  $0\le \Re \zeta\le 1$. We note that $T_{1+iy}:K_\vt\rightarrow L^2(dA)$ is bounded 
by a constant $C_1$ that depends on $\Phi$ only. 
Further, $T_{iy}:K_\vt\rightarrow L^2(dA)$ is a Hilbert-Schmidt operator with norm (see the proof of Proposition \ref{p:HS})
\[
\|T_{iy}\|_{HS}\le 1+\int_{\DD}\Delta K(z,z) N(z)^{p/2}\Phi(z)^{1-p/2}dA(z)\le C_2
\]
by the assumption of the theorem. Then $T_{1-2/p}\in \mS_p$ and
 $\|T_{1-2/p}\|_{\mS_p}\le C_1^{2/p}C_2^{1-2/p}$ by Theorem 13.1 in \cite{GK}. But $T_{1-2/p}f(z)=N(z)^{1/2}f'(z)$.
\end{proof}

Note that for $\vt=0$,  Proposition \ref{p:sufSp} gives us the known condition of Luecking and Zhu \cite{LZ}.




\subsection{Integral condition insufficient when $\vt$ is not one-component}\label{subsec:counterex}

Let 
\begin{equation*}
\alpha_n = 2^{-n}, \; r_n = 1 - \frac{1}{n}2^{-2n}, \; z_n = r_n e^{i\alpha_n}, \ n=1,2,\ldots \ ,
\end{equation*}
and let $\vt$ be the Blaschke product
\begin{equation*}
\vt(z) = \prod_{n=1}^\infty \frac{-\overline{z}_n}{|z_n|} \frac{z-z_n}{1-\overline{z}_nz}, \quad z \in \DD.
\end{equation*}
The sequence $\{z_n\}$ is interpolating (for $H^\infty$), since it is  hyperbolically separated in $\D$ and $\sum_n (1-|z_n|)\delta_{z_n}$ is a Carleson measure for $H^2$.
Therefore the sequence of normalized reproducing kernels
\[
 \{  {(1-|z_n|)^{1/2}}k_{z_n}\}_n,   \   k_{z_n}=\frac1 {1-\bar{z}_nz}, 
\]
is a Riesz basis for  $K_\vt$. In particular any Hilbert-Schmidt operator $C: K_\vt \to H^2$ satisfies
\beq
\label{eq:HS}
\sum_n(1-|z_n|) \langle C k_{z_n},  C k_{z_n} \rangle  <\infty.
\eeq

Let $\Delta = \{z \, : \, |z-\frac12| < \frac12\}$ and let   $\vf(z) = (1+z)/2$, a conformal mapping of $\D$ onto 
$\Delta$. We will show that \eqref{eq:HS} with $C=C_\vf$ fails, while  
\begin{equation}
\label{eq:onecompcond}
\int_\DD \left(\frac{1-|\vt(z)|^2}{1-|z|^2}\right)^3 N_\vf(z) \, dA(z) < \infty.
\end{equation}

Indeed, since 
 $N_\vf(z) = -\log |2z-1| \gtrsim \dist(z, \Delta)$ for $z \in \Delta$, we have 
\begin{align*}
(1-|z_n|) \langle C_\vf k_{z_n}, C_\vf k_{z_n} \rangle_{H^2} &\gtrsim (1-|z_n|) \int_\Delta \left|z - \frac{1}{\overline{z_n}} \right|^{-4} N_\vf(z) \, dA(z) \\ &\gtrsim \frac{1-|z_n|}{\dist(z_n, \Delta)} \sim \frac{1}{n}.
\end{align*}
This yields divergence of the series on the right hand side of \eqref{eq:HS}. Therefore $C_\vf$ is not a Hilbert-Schmidt operator.

To prove \eqref{eq:onecompcond} we need an auxiliary statement.

\begin{lemma} \label{lem:uniformsum}
For any $a > 0$, the sum
\begin{equation*}
\sum_{n=1}^\infty \left( \frac{1-|z_n|}{|1-\overline{z_n}z|}\right)^a
\end{equation*}
is uniformly convergent in $\overline{\DD}$. In particular, it is uniformly bounded.
\end{lemma}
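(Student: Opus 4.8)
The substance of the lemma is the uniform bound $C_a:=\sup_{z\in\overline{\DD}}\sum_{n}\bigl(\tfrac{1-|z_n|}{|1-\overline{z_n}z|}\bigr)^a<\infty$ (the series converges for each fixed $z$, but the $n$-th term equals $1$ at $z=e^{i\alpha_n}$, so it is the boundedness that matters), and this is what I would prove. The first ingredient is the elementary estimate
\[
|1-\overline{z_n}z|\ \gtrsim\ (1-|z_n|)+d_n(z),\qquad z\in\overline{\DD},
\]
where $d_n(z)$ denotes the arc-length distance on $\TT$ from $z/|z|$ to $e^{i\alpha_n}$ (and $z=0$ is trivial, the term being $(1-|z_n|)^a$). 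This comes from writing $\overline{z_n}z=\rho e^{i\phi}$ with $\rho=|z_n||z|\le1$ and $\phi\in(-\pi,\pi]$, using $|1-\rho e^{i\phi}|^2=(1-\rho)^2+2\rho(1-\cos\phi)\asymp(1-\rho)^2+\phi^2$ (handling $\rho\le\tfrac12$, where $|1-\rho e^{i\phi}|\ge\tfrac12$, separately), and then $1-\rho\ge1-|z_n|$, $|\phi|=d_n(z)$. Consequently each term is $\lesssim\min\bigl(1,(\tfrac{1-|z_n|}{d_n(z)})^a\bigr)$, which depends on $z$ only through $\theta:=\arg z$, so it suffices to bound $\Sigma(\theta):=\sum_n\min\bigl(1,(\tfrac{1-|z_n|}{d_n})^a\bigr)$ uniformly in $\theta\in(-\pi,\pi]$, where now $d_n=d_n(\theta)$ is the distance on $\TT$ between $\theta$ and $\alpha_n=2^{-n}$, and $1-|z_n|=\tfrac1n2^{-2n}$.

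The second step is a case analysis on $\theta$. If $\theta\le0$ or $|\theta|\ge1$, then $d_n\ge2^{-n}$ for every $n$ (for $|\theta|\ge1$ by the triangle inequality on $\TT$, since $\alpha_n\le\tfrac12$; for $\theta\le0$ directly), so $\Sigma(\theta)\le\sum_n(n^{-1}2^{-n})^a<\infty$. If $0<\theta<1$, choose the integer $m\ge1$ with $2^{-m}\le\theta<2^{-m+1}$. Then for $n\ge m+1$ one has $d_n=\theta-2^{-n}\ge2^{-m-1}$; for $n\le m-2$ one has $d_n=2^{-n}-\theta\ge2^{-n-1}$; and the (at most two) remaining indices $n\in\{m-1,m\}$ contribute at most $2$, each term being $\le1$.

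Summing the three pieces,
\begin{multline*}
\Sigma(\theta)\ \le\ 2+\sum_{n\le m-2}\Bigl(\tfrac{2^{-2n}/n}{2^{-n-1}}\Bigr)^{a}+\sum_{n\ge m+1}\Bigl(\tfrac{2^{-2n}/n}{2^{-m-1}}\Bigr)^{a} \\ \le\ 2+\sum_{n\ge1}\bigl(2n^{-1}2^{-n}\bigr)^{a}+2^{(m+1)a}\sum_{n\ge m+1}2^{-2na},
\end{multline*}
and the last sum equals $2^{-(m+1)a}/(1-2^{-2a})$, bounded uniformly in $m$. Hence $\Sigma(\theta)$ is bounded by a constant independent of $\theta$, and the claim follows.

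The one point that needs care is exactly the tail $\sum_{n\ge m+1}$ as $\theta\to0^{+}$ (so $m\to\infty$): infinitely many $\alpha_n$ are then comparable in size to $\theta$, and inserting a bound of the shape $d_n\gtrsim2^{-m}$ into the \emph{full} series would produce a divergent factor $2^{ma}$. The resolution is that all such indices satisfy $n\ge m+1$, so the geometric tail $\sum_{n\ge m+1}2^{-2na}\asymp2^{-2ma}$ absorbs the factor $2^{ma}$ coming from $d_n^{-a}$; equivalently, one must use the super-geometric smallness of $1-|z_n|=\tfrac1n2^{-2n}$ relative to the spacing $2^{-n}$ of the points $\alpha_n$, and keep track of the range of summation and not merely the size of $d_n$. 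The remaining bookkeeping — the wrap-around in the definition of $d_n$, the small cases $m=1,2$, and the verification that $d_n\ge2^{-n}$ when $\theta\le0$ — is routine.
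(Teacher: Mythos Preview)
Your argument is correct and establishes the uniform bound $\sup_{z\in\overline{\DD}}\sum_n\bigl(\tfrac{1-|z_n|}{|1-\overline{z_n}z|}\bigr)^a<\infty$; you are also right that uniform \emph{convergence} fails literally, since the $n$th term equals $1$ at $z=e^{i\alpha_n}$, and that only the boundedness is used downstream.

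The route, however, differs from the paper's. You reduce to $\theta=\arg z$ via the kernel estimate $|1-\overline{z_n}z|\gtrsim(1-|z_n|)+d_n(\theta)$, locate $\theta$ on the dyadic scale by choosing $m$ with $2^{-m}\le\theta<2^{1-m}$, and dispatch the three ranges $n\le m-2$, $n\in\{m-1,m\}$, $n\ge m+1$ by explicit geometric sums, the key cancellation being that the tail factor $2^{(m+1)a}$ is absorbed by $\sum_{n\ge m+1}2^{-2na}\asymp 2^{-2ma}$. The paper instead runs a layer-cake argument: setting $N(t,z)=\bigl|\{n:\tfrac{1-|z_n|}{|1-\overline{z_n}z|}>t\}\bigr|$, it shows that any two indices $n>k$ in this set force $2^{-k}|z|\lesssim|\overline{(z_n-z_k)}z|\lesssim t^{-1}(1-|z_k|)\le t^{-1}2^{-2k}$, whence $N(t,z)\le\log\tfrac{C}{|z|t}$, and then integrates $a\int_0^1 t^{a-1}N(t,z)\,dt$. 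Your approach is more hands-on and makes the role of the super-geometric decay $1-|z_n|=\tfrac1n 2^{-2n}$ versus the spacing $2^{-n}$ completely transparent; the paper's counting argument is shorter and more conceptual (at most $O(\log(1/t))$ terms can exceed level $t$) but is a bit less self-contained near $z=0$, where its bound degenerates and one must note separately that the sum is trivially small there.
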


\begin{proof}
For $z \in \overline{\DD}$ and $0 < t < 1$, let $M(t,z) = \{n \geq 1 \, : \, \frac{1-|z_n|}{|1-\overline{z_n}z|} > t \}$ and let $N(t,z) = |M(t,z)|$ be the number of points in $M(t,z)$. Pick $n$ and $k$ such that $n > k \geq N(t,z)-1$ and $z_n, z_k \in M(t,z)$. Then
\begin{equation*}
2^{-k}|z| \lesssim |\overline{(z_n - z_k)}z| \lesssim \frac{1-|z_k|}{t} \leq \frac{2^{-2k}}{t},
\end{equation*}
from which $|z|t \lesssim 2^{-k} \lesssim 2^{-N(t,z)}$. That is, there exists a constant $C > 0$ such that
\begin{equation*}
N(t,z) \leq \log \frac{C}{|z|t}.
\end{equation*}
In combination with the identity
\begin{equation*}
\sum_{n=1}^\infty \left( \frac{1-|z_n|}{|1-\overline{z_n}z|}\right)^\alpha = \alpha \int_0^1 t^{\alpha-1} N(t,z) \, dt,
\end{equation*}
the statement of the lemma clearly follows.
\end{proof}

In order to prove \eqref{eq:onecompcond} we first  observe that  for
 $0 < \epsilon < 1$  
\begin{align*}
\frac{1-|\vt(z)|^2}{1-|z|^2} &\leq \sum_n \frac{1-|z_n|^2}{|1-\overline{z_n}z_n|^2} \\ &\leq \left( \sum_n \left(\frac{1-|z_n|^2}{|1-\overline{z_n}z|}\right)^{1-\epsilon} \right)^{2/3} \left( \sum_n \frac{(1-|z_n|^2)^{1+2\epsilon}}{|1-\overline{z_n}z|^{4+2\epsilon}}\right)^{1/3}.
\end{align*}
Let $\Delta' = \{z \, : \, |z-1/2| < 1/4\}$. Using Lemma \ref{lem:uniformsum} and the fact that $N_\vf(z) \sim \dist(z, \Delta)$ for $z \in \Delta \setminus \Delta'$ , we obtain 
\begin{multline*}
\int_{\DD\setminus\Delta'} \left(\frac{1-|\vt(z)|^2}{1-|z|^2}\right)^3 N_\vf(z) \, dA(z) \lesssim \\
 \sum_n (1-|z_n|^2)^{1+2\epsilon} \int_\Delta \left|z-\frac{1}{\overline{z_n}}\right|^{-4-2\epsilon} \dist(z,\Delta) \, dA(z)   \sim  \\  
\sum_n \left(\frac{1-|z_n|}{\dist(z_n, \Delta)}\right)^{1+2\epsilon} \sim \sum_n \frac{1}{n^{1+2\epsilon}} < \infty,
\end{multline*}
which of course implies \eqref{eq:onecompcond}.

\end{document}